\definecolor{darkblue}{RGB}{0,0,170}
\definecolor{darkerblue}{RGB}{0,20,120}
\definecolor{brickred}{RGB}{200,0,0}
\definecolor{tempcolor}{RGB}{200,0,0}
\def\@seccntformat#1{%
	\protect\textup{%
		\protect\@secnumfont
		\expandafter\protect\csname format#1\endcsname 
		\csname the#1\endcsname
		\protect\@secnumpunct
	}%
}
\def\th@plain{%
  \thm@notefont{}
  \slshape 
}
\def\th@definition{%
  \thm@notefont{}
  \normalfont 
}
\theoremstyle{plain}
\newtheorem{definition}{Definition}[section]
\newtheorem{example}[definition]{Example}
\newtheorem{theorem}[definition]{Theorem}
\newtheorem{proposition}[definition]{Proposition}
\newtheorem{remark}[definition]{Remark}
\newtheorem{corollary}[definition]{Corollary}
\newcounter{assum}
\newtheorem{assumption}[assum]{Assumption}
\newcommand{\R}{\mathbb{R}}
\newcommand{\N}{\mathbb{N}}
\newcommand{\C}{\mathcal{C}}
\newcommand{\Cc}{\mathcal{C}}
\newcommand{\A}{\mathcal{A}}
\newcommand{\M}{\mathcal{M}}
\newcommand{\G}{\mathcal{G}}
\newcommand{\Mm}{\mathcal{M}}
\newcommand{\B}{\mathcal{B}}
\renewcommand{\P}{\mathbb{P}}
\newcommand{\E}{\mathbb{E}}
\DeclarePairedDelimiter{\norm}{\lVert}{\rVert}
\DeclarePairedDelimiter{\pair}{\langle}{\rangle}
\DeclarePairedDelimiter{\dual}{\langle}{\rangle}
\DeclarePairedDelimiter{\inner}{(}{)}
\newcommand{\de}{\mathop{}\!\mathrm{d}}
\renewcommand{\d}{\mathrm{d}}
\DeclareMathOperator{\supp}{supp}
\DeclareMathOperator{\post}{post}
\DeclareMathOperator{\pr}{pr}
\DeclareMathOperator{\var}{var}
\DeclareMathOperator{\Hell}{Hell}
\newcommand{\miu}{u}
\newcommand{\bld}[1]{\boldsymbol{#1}}
\renewcommand{\epsilon}{\varepsilon}
\newcounter{constantC}
\newcommand{\newconstant}[1]{\refstepcounter{constantC}\label{#1}}
\newcommand{\useconstant}[1]{C_{\ref{#1}}}
\newcommand{\defconstant}[1]{ \newconstant{C_#1}\expandafter\newcommand\csname C#1\endcsname{\useconstant{C_#1}}}  %
\newcounter{constantc}
\newcommand{\newconstantc}[1]{\refstepcounter{constantc} \label{#1}}
\newcommand{\useconstantc}[1]{c_{\ref{#1}}}
\newcommand{\defconstantc}[1]{ \newconstantc{c_#1}\expandafter\newcommand\csname c#1\endcsname{\useconstantc{c_#1}}}
\newcommand{\Truong}[1]{{#1}}
\numberwithin{equation}{section}
\begin{document}

\title[A Bayesian approach to inverse problems in spaces of measures]{
A Bayesian approach to inverse problems \\ in spaces of measures
}
\author[P.-T. Huynh]{Phuoc-Truong Huynh}
\address{Institut f\"{u}r Mathematik, Alpen-Adria-Universit\"{a}t Klagenfurt,  9020 Klagenfurt, Austria.}
\email{phuoc.huynh (at) aau.at}

\begin{abstract} In this work, we develop a Bayesian framework for solving inverse problems in which the unknown parameter belongs to a space of Radon measures taking values in a separable Hilbert space. The inherent ill-posedness of such problems is addressed by introducing suitable measure-valued priors that encode prior information and promote desired sparsity properties of the parameter. Under appropriate assumptions on the forward operator and noise model, we establish the well-posedness of the Bayesian formulation by proving the existence, uniqueness, and stability of the posterior with respect to perturbations in the observed data. In addition, we also discuss computational strategies for approximating the posterior distribution. Finally, we present some examples that demonstrate the effectiveness of the proposed approach.

\bigskip

\noindent \textsc{Keywords.} Bayesian inverse problems, Radon measures, non-Gaussian prior.
	
\bigskip
	
\noindent \textsc{2020 Mathematics Subject Classification.} 35R30, 35Q62, 62F15, 62G35.

\end{abstract}

\maketitle
\setcounter{tocdepth}{1}
\tableofcontents

\section{Introduction}
In this work, we study an infinite-dimensional Bayesian framework for solving inverse problems, where the emphasis is placed on incorporating sparsity-promoting prior knowledge about the unknown signal. Such sparsity assumptions are motivated by the observation that in many real-world applications, the underlying physical parameters to be identified are localized or concentrated in space, which results naturally in sparse representations. 

To be precise, we consider an additive model for inverse problems of the form
\begin{equation}\label{eq:5_BIP}
	z = \mathcal{G}(u) + \xi,
\end{equation}
where $u$ is the unknown parameter, $z$ represents the vector of measurement data, $\G$ is the forward operator and $\xi$ is the measurement noise. We assume that the $u$ admits a \textit{sparse} structure, in the sense that it can be expressed as a finite sum of weighted Dirac measures supported on a domain $\Omega$. Specifically, $u$ is a discrete measure taking values in a separable Hilbert space $H$, given by
\begin{equation}\label{eq:groundtruth_form} u = \sum_{k=1}^{N_s} q_k \delta_{y_k}, \quad q_1, \ldots, q_{N_s} \in H, \quad y_1, y_2,\ldots, y_{N_s} \in \Omega, \end{equation}
where $N_s$ denotes the number of sources (typically small), $q_1, \ldots, q_{N_s}$ are the \textit{amplitudes} in $H$, and $y_1, \ldots, y_{N_s}$ are their locations. This type of signal has various applications, such as in Helmholtz source identification \cite{engel_hafemeyer_munch_schaden_2019}, optimal control problem of the wave equation with measure-valued control \cite{Trautmannseismic}, and optimal transport problems \cite{bredies_fanzon_2020}. Hence, the forward operator $\G$ maps from the parameter space $\M(\Omega, H)$, which is the space of Radon measures on $\Omega$ that take values in a Hilbert space $H$, to a Banach space $Y$. Detailed descriptions of the parameter space $\M(\Omega, H)$ will be given in Section~\ref{sec:5_vectormeasures}. Since the map $\G$ is typically not continuously invertible, this problem is, in general, ill-posed; thus, regularization methods are required.

In recent years, the Bayesian approach as a means of regularization has become more popular. We recall from \cite{stuart_2010, stuart_2017} that a Bayesian solution to \eqref{eq:5_BIP} is a probability measure $\mu^z_{\post}$, namely the posterior distribution of $u$ conditioned on measurement data $z$, as described by Bayes' rule \cite{stuart_2010}
\begin{align*}
\label{eq:bayesian_formula}
	\dfrac{\d \mu^z_{\post}}{d\mu_{\pr}} (u) = \dfrac{1}{Z(z)} \exp(-\Phi(u;z)), \text{ where } Z(z) = \int_{\M(\Omega, H)} \exp(-\Phi(u;z)) \d \mu_{\pr}(u),
\end{align*}
provided that all the quantities are well-defined. 
Here, $\mu_{\pr}$ is the \textit{prior measure} reflecting our belief about the parameter $u$, $\Psi(u;z)$ is the \textit{likelihood potential}, and $\mu^z_{\post}$ is the posterior measure representing the distribution of $u$ after incorporating the observed data $z$.
As is well known, the space $\M(\Omega, H)$ is a nonseparable Banach space, which is, in fact, the main challenge in studying the inverse problem \eqref{eq:5_BIP} within the Bayesian setting, since the well-posedness theory in \cite{stuart_2010} cannot be directly applicable. In addition, a suitable prior measure $\mu_{\pr}$ on $\M(\Omega, H)$ should be introduced to establish the well-posedness of the Bayesian inverse problem, as well as to ensure computationally feasible sampling of the solutions.

Since the pioneering work by Stuart \cite{stuart_2010}, the theory of Bayesian inverse problems has been extensively developed. The well-posedness of Bayesian inverse problems has been studied in several settings, such as in quasi-Banach spaces \cite{sullivan_2017}, in non-separable Banach spaces with unconditional Schauder bases \cite{hosseini_2017}. The stability of the map $z \mapsto \mu_{\post}^z$ has been considered in various distances on the space of probability measures, for instance, the Hellinger distance \cite{stuart_2010}, the total variation distance \cite{hosseini_2017}. Notably, the work \cite{latz_2020} introduced a general concept for the well-posedness of Bayesian inverse problems, under a mild assumption on the measurability of the parameter space and the likelihood function $L(z|u)$. In addition, the study of prior measures is of particular interest for characterizing prior beliefs about the ground truth. For instance, Besov priors \cite{dashti_harris_stuart_2012, agapio_burger_dashti_helin_2018} promote sparsity in the representation of functions, while convex and heavy-tailed priors \cite{hosseini_2017, hosseini_nigam_2017} offer alternative structural properties.

From a practical point of view, Bayesian approaches have been employed in various applications, such as Helmholtz source identification with Dirac sources \cite{engel_hafemeyer_munch_schaden_2019}, scalar conservation laws \cite{dashti_duong_2023}, and geophysics \cite{bui_ghatttas_martin_stadler_2013}. We remark that the application of Bayesian inverse problems most closely related to our work is found in \cite{engel_hafemeyer_munch_schaden_2019}, where a prior measure is defined on the space $\ell^1$ of summable sequences. While the authors' main result on well-posedness is mathematically justified and presents a notable application to the sound source localization problem within the Bayesian framework, we believe that this manuscript offers a more intuitive approach to Bayesian inverse problems that incorporates the sparsity assumption of the underlying signal.

\subsection{Contributions} The main contributions of this manuscript are the following:
\begin{enumerate}
	\item[(1)] We introduce a suitable prior measure $\mu_{\pr}$ on the space of measures $\M(\Omega, H)$. This prior is characterized in terms of point processes, see, for instance, \cite{daley_vere_2003, reiss_1993}. While there is a rich theory on point processes for nonparametric Bayesian estimation (see \cite{broderick_2018} and the references therein), most existing work considers point processes with real, positive coefficients. We instead consider vector-valued coefficients and show that the resulting point process belongs to $\M(\Omega, H)$ $\mu_{\pr}$-almost surely.
	
    \item[(2)] Given the aforementioned priors, we study the well-posedness of the Bayesian inverse problem in this setting. Our approach follows that of \cite{latz_2020}, where the well-posedness of the Bayesian inverse problem \eqref{eq:5_BIP} is characterized under mild assumptions on the parameter space $\M(\Omega, H)$ and the likelihood function $L(z|u) = \exp(-\Psi(u;z))$. To this end, rather than treating $\M(\Omega, H)$ as a Banach space, we employ the weak* topology on $\M(\Omega, H)$, leveraging favorable properties of this topological structure to obtain suitable measurability properties on this space. Consequently, under the weak* continuity assumption on the forward operator, we establish its measurability, thereby making the theory in \cite{latz_2020} applicable.
\end{enumerate}
Finally, we present some examples that illustrate the applicability of our approach within this setting.

\subsection{Organization of the paper} The paper is organized as follows:
In Section~\ref{sec:5_vectormeasures}, we recall some topological and measure-theoretic properties of the space $\M(\Omega, H)$. Based on these properties, we define a suitable prior on the space $\M(\Omega, H)$ in Section~\ref{sec:5_priordistribution}. In Section~\ref{sec:5_wellposedness}, we establish a well-posedness result for the Bayesian inverse problem \eqref{eq:5_BIP}. Finally, we present some examples that illustrate the applicability of the developed theory.

\section{Vector-valued measures}\label{sec:5_vectormeasures}

Let us introduce some notation that will be used in the sequel.  First, we denote by $\Omega \subset \R^d$, $d \ge 1$ the closure of a bounded domain and denote by $H$ a separable Hilbert space equipped with the inner product $(\cdot, \cdot)_{H}$. Also, we consider two types of measures that are crucial for the analysis: The first type includes elements of \( \M(\Omega, H) \), which are (Radon) \( H \)-valued measures on \( \Omega \), and the second type consists of probability measures on \( \M(\Omega, H) \). Elements of the first type will be denoted by \( u, v, \) etc., while those of the second type will be denoted by \( \mu, \nu, \) etc., for clarity.

\subsection{Topological properties of \( \M(\Omega,  H)\)} We first introduce vector measures on $\Omega$. An $H$-valued measure on $\Omega$ is a countably additive mapping $u\colon \mathcal{B}(\Omega) \to H$, where $\mathcal{B}(\Omega)$ denotes the Borel $\sigma-$algebra on $\Omega$. For every $H$-valued measure $u$, we define the associated total variation measure $|u|: \mathcal{B}(\Omega) \to \R^+ \cup \{ + \infty \}$ by
\begin{align*}
	|u|(B) := \sup \left\{ \sum_{n=1}^{\infty} \norm{u(B_n)}_{H} : \{B_n\}_{n \in \mathbb{N}} \subset \mathcal{B}(\Omega) \text{ is a disjoint partition of }B\right\},
\end{align*}
for every $B \in \mathcal{B}(\Omega)$. Hence, the space $\M(\Omega, H)$ is the space of $H$-valued measures on $\Omega$ with finite total variations, namely
\begin{align*}
	\M(\Omega, H) := \left\{ u \text{ is an }H\text{-valued measure on } \Omega : |u|(\Omega) < \infty \right\},
\end{align*}
which is a Banach space equipped with the norm
\[ \norm{u}_{\M(\Omega, H)} := |u|(\Omega) = \int_{\Omega} \d |u|. \] 

The support of the vector measure $u$, defined in the usual way, satisfies $\supp u = \supp |u|$. Hence, for a discrete measure
\begin{align*}
	u = \sum_{k=1}^{N_s} q_k \delta_{y_k}, \quad q_k \in H, y_k \in \Omega, \text{ for all }k = 1,\ldots, N_s,
\end{align*}
one has
\begin{align*}
	\supp u = \{ y_1,\ldots, y_{N_s}\} \quad \text{and}\quad \norm{u}_{\M(\Omega, H)} = \sum_{k=1}^{N_s} \norm{q_k}_{H}.
\end{align*}
Note that for $H = \R$, we obtain the classical space of real-valued measures $\M(\Omega, \R) \equiv \M(\Omega)$, see for instance \cite{bredies_pikkarainen_2013}, while $H = \mathbb{C}^n$ gives $\M(\Omega, \mathbb{C}^n)$ corresponding to the space of complex vector-valued measures in \cite{pieper_tang_trautmann_walter_2020}.

Next, we can see that every $u \in \M(\Omega, H)$ is absolutely continuous with respect to $|u|$, meaning that if $B \in \mathcal{B}(\Omega)$ and $|u|(B) = 0$, then $u(B) = 0_{H}$. Hence, by the Radon-Nikodym Theorem for vector measures, see \cite[Corollary 12.4.2]{lang_1983}, there exists a function $u': \Omega \to H$ such that \[ \norm{u'}_H \in L^{\infty}(\Omega, \d u), \text{ with } \norm{u'(x)}_H = 1 \text{ for } |u|-\text{almost every }x \in \Omega
\]
and $u$ can be decomposed into
\[
u(B) = \int_{B} \d u = \int_{\Truong{B}} u' \d |u|, \quad \text{ for all }B \in \B(\Omega).
\]
Equivalently, $\M(\Omega, H)$ can be characterized as the dual of $\C(\Omega, H)$, where $\C(\Omega,H)$ is the space of bounded continuous functions on $\Omega$ taking values in $H$. By Singer's representation theorem (see, e.g., \cite{hensgen_1996}), the duality pairing between $\M(\Omega,H)$ and $\C(\Omega,H)$ is defined by
\[ \pair{u, f}_{\M(\Omega,H), \C(\Omega,H)} = \int_{\Omega} f \d u = \int_{\Omega} \left(f(x), u'(x)\right)_H \d |u|(x).\]
By a slight abuse of notation, we will simply write $\pair{u, f}$ to denote the dual pairing between $u \in \M(\Omega, H)$ and $f \in \C(\Omega, H)$, unless otherwise stated. Hence, the norm on $\M(\Omega, H)$ is also characterized by the dual norm
\begin{align*}
\norm{u}_{\Mm} &= \sup \left\{ \pair{u, f} : f \in \C(\Omega, H), \,\sup_{x \in \Omega} \norm{f(x)}_H \le 1 \right\} \\
&= \sup \Big \{ \int_{\Omega} (f(x), u'(x))_H \d |u|(x): f \in \C(\Omega, H), \, \sup_{x \in \Omega} \norm{f(x)}_{H} \le 1 \Big\}.
\end{align*}
Since $\C(\Omega, H)$ is nonreflexive, the structure of $\M(\Omega, H)$ is rather complicated; in particular, $\M(\Omega, H)$ itself is both nonreflexive and nonseparable. Nevertheless, when equipped with the weak* topology $w^*$ (i.e. the coarsest topology \Truong{such} that every map $f \mapsto \pair{u, f}$ is continuous), the space $\M(\Omega, H)$ becomes a locally convex Hausdorff space whose dual is $\C(\Omega, H)$. This property characterizes the weak* convergence on $\M(\Omega, H)$, namely a sequence of measures $\{ u_k \}_{k \in \mathbb{N}}$ converges to a limit $u \in \M(\Omega, H)$ if 
\begin{align*}
	\pair{u_k, f} \to \pair{u, f} \text{ as } k \to \infty,\quad \text{ for all } f \in \C(\Omega, H).
\end{align*}

Equipped with the weak* topology $w^*$, it can be seen that $(\M(\Omega, H), w^*)$ is a complete locally convex Hausdorff topological vector space. In addition, $\M(\Omega, H)$ is a Souslin space \cite{saint_1978}, i.e., an image of a separable, completely metrizable space under a continuous map. For more details on the topological properties of the space of vector measures, we refer to \cite[Section 12.3]{lang_1983} and \cite{pieper_2015}.

\subsection{Measurability on $\M(\Omega, H)$} In the following, we will recall several measure-theoretic properties of the space $\M(\Omega, H)$. Many of these results are classical and can be implied from properties in standard references such as \cite{diestel_1977, lang_1983, Bogachev_2007}. However, since precise references for some statements are not readily available, we also include their proofs for completeness.

 In order to introduce prior measures on $\M(\Omega, H)$,  an appropriate Borel $\sigma$-algebra on $\M(\Omega, H)$ should be introduced.  The following $\sigma$-algebras on $\M(\Omega, H)$ are natural to consider:
\begin{enumerate}
	\item[(1)] The strong Borel $\sigma$-algebra, denoted by $\mathcal{B}$, generated by the strong topology (the norm topology) on $\M(\Omega, H)$.
	\item[(2)] The weak* Borel $\sigma$-algebra, denoted by $\B_{w^*}$, generated by the weak* topology on $\M(\Omega, H)$.
\end{enumerate}

In separable Banach spaces, the two $\sigma$-algebras, in fact, coincide, see \cite[Section A.2.2]{stuart_2017}. However, since $\M(\Omega, H)$ is not separable, this property does not hold in $\M(\Omega, H)$; see Remark~\ref{rem:strongvsweak}. In the following, we show that the $\sigma$-algebra $\B_{w^*}$ is appropriate for our analysis. We begin by showing that $\B_{w^*}$ can also be characterized by linear functionals on $\M(\Omega, H)$.

\begin{proposition}\label{prop:equivalence_sigma} The $\sigma$-algebra $\B_{w^*}$ coincides with the $\sigma$-algebra generated by $\C(\Omega, H)$, which corresponds to a collection of linear functionals on $\M(\Omega, H)$ via $u \mapsto \pair{u, f}$, $f \in \C(\Omega, H)$.
\end{proposition}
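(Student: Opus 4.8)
The plan is to establish the two inclusions $\sigma(\C(\Omega,H)) \subseteq \B_{w^*}$ and $\B_{w^*} \subseteq \sigma(\C(\Omega,H))$ separately. The first is immediate from the definition of the weak* topology as the coarsest one making every map $u \mapsto \pair{u,f}$, $f \in \C(\Omega,H)$, continuous: each such functional is then weak*-continuous, hence $\B_{w^*}$-measurable, so the $\sigma$-algebra it generates is contained in $\B_{w^*}$. For the reverse inclusion I would first record two facts about the predual. Since $\Omega$ is compact and metrizable and $H$ is separable, $\C(\Omega,H)=C(\Omega,H)$ is a separable Banach space; a countable dense set is obtained from finite sums $\sum_k \psi_k(\cdot)h_k$ with $\psi_k$ ranging over a countable dense subset of $C(\Omega)$ and $h_k$ over a countable dense subset of $H$, approximating a general $f \in C(\Omega,H)$ by a partition-of-unity argument. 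Fix such a countable dense set $\{f_j\}_{j\in\N}\subseteq\C(\Omega,H)$ together with a countable dense set $\{g_j\}_{j\in\N}$ in its closed unit ball. By Singer's theorem, $\M(\Omega,H)=\C(\Omega,H)^*$ and $\pair{\cdot,\cdot}$ is the canonical dual pairing.

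The key idea for $\B_{w^*} \subseteq \sigma(\C(\Omega,H))$ is to localize to norm-bounded sets, since the weak* topology on the nonseparable space $\M(\Omega,H)$ need not be second countable globally. Write $\mathcal{A}:=\sigma(\C(\Omega,H))$ and $B_n:=\{u\in\M(\Omega,H):\norm{u}_{\Mm}\le n\}$, so that $\M(\Omega,H)=\bigcup_{n\in\N}B_n$. First, $B_n\in\mathcal{A}$: from the dual characterization of the norm one has $\norm{u}_{\Mm}=\sup_{j\in\N}\pair{u,g_j}$, a countable supremum of $\mathcal{A}$-measurable functions, hence $\mathcal{A}$-measurable. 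Second, on $B_n$ the relative weak* topology coincides with the a priori coarser topology induced by the countable family $\{u\mapsto\pair{u,f_j}\}_{j}$: given $u_0\in B_n$, $f\in\C(\Omega,H)$ and $\eps>0$, pick $f_j$ with $\norm{f-f_j}_{\C(\Omega,H)}<\eps/(3n)$; since $\abs{\pair{u,f-f_j}}\le n\norm{f-f_j}_{\C(\Omega,H)}$ for $u\in B_n$, the basic weak* neighbourhood $\{u\in B_n:\abs{\pair{u,f}-\pair{u_0,f}}<\eps\}$ contains $\{u\in B_n:\abs{\pair{u,f_j}-\pair{u_0,f_j}}<\eps/3\}$. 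Consequently $(B_n,w^*)$ is second countable (indeed weak*-metrizable, by Banach--Alaoglu and separability of the predual), with a countable base consisting of finite intersections of sets $\{u\in B_n:\pair{u,f_j}\in I\}$ for $I$ an open rational interval, each of which lies in $\mathcal{A}$. Now let $U\subseteq\M(\Omega,H)$ be weak*-open. For each $n$, $U\cap B_n$ is relatively weak*-open in $B_n$, hence a countable union of the above base sets; writing each as $V_k\cap B_n$ with $V_k\in\mathcal{A}$ gives $U\cap B_n = B_n\cap\bigcup_{k}V_k\in\mathcal{A}$. Therefore $U=\bigcup_{n\in\N}(U\cap B_n)\in\mathcal{A}$, and since the weak*-open sets generate $\B_{w^*}$, we conclude $\B_{w^*}\subseteq\mathcal{A}$, hence equality.

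I expect the main obstacle to be precisely the non-separability of $\M(\Omega,H)$ in the norm topology, which blocks the one-line argument valid in separable Banach spaces (where the strong and weak Borel $\sigma$-algebras already agree); the remedy is the decomposition $\M(\Omega,H)=\bigcup_n B_n$ into balls that are well-behaved under the weak* topology thanks to separability of $\C(\Omega,H)$, combined with the observation that each $B_n$ is itself $\mathcal{A}$-measurable because $\norm{\cdot}_{\Mm}$ is a countable supremum of the generating functionals. An alternative, more abstract route would be to invoke the Souslin property of $(\M(\Omega,H),w^*)$ recalled above: the Borel $\sigma$-algebra of a locally convex Souslin space is generated by its continuous dual, which here is $\C(\Omega,H)$; I would mention this as a shortcut but prefer the self-contained argument on the balls $B_n$.
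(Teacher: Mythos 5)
Your proof is correct, and it is in fact more complete than the argument given in the paper. Both proofs obtain the inclusion $\sigma(\C(\Omega,H)) \subset \B_{w^*}$ in the same one-line way, from the weak* continuity of each functional $u \mapsto \pair{u,f}$. For the reverse inclusion, the paper checks that the basic weak* neighbourhoods $V(u_0;f_1,\ldots,f_k;r)$ lie in the generated $\sigma$-algebra and then concludes immediately ``as these sets form a basis of the weak* topology.'' That last inference is not valid in general: a $\sigma$-algebra containing a basis of a topology contains all open sets only if every open set is a \emph{countable} union of basis elements, and the weak* topology on the nonseparable space $\M(\Omega,H)$ is not second countable (nor first countable) globally, which is precisely the obstacle you identify. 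Your localization to the norm balls $B_n$ supplies the missing ingredient: separability of the predual $\C(\Omega,H)$ makes $(B_n,w^*)$ second countable with a base drawn from $\sigma(\C(\Omega,H))$, the dual characterization of the norm as a countable supremum over a dense subset of the unit ball shows $B_n \in \sigma(\C(\Omega,H))$ (the same device the paper uses later to prove measurability of the norm map), and the decomposition $\M(\Omega,H)=\bigcup_n B_n$ then gives $\B_{w^*} \subset \sigma(\C(\Omega,H))$. The only detail worth stating explicitly is that $\sup_{\norm{g}\le 1}\pair{u,g}=\sup_j\pair{u,g_j}$ over the countable dense set, which follows from the norm continuity of $g\mapsto\pair{u,g}$. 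The Souslin-space shortcut you mention is also legitimate given the reference to Schwartz--Saint-Raymond already cited in the paper, but your self-contained argument on the balls is preferable since it makes transparent exactly where separability of $\C(\Omega,H)$ enters.
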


\begin{proof} Denote by $\widetilde{\sigma}$ the $\sigma$-algebra generated by the given set of linear functionals on $\M(\Omega, H)$. For every $f \in \C(\Omega, H)$, the linear functional $u \mapsto \pair{u, f}$ is continuous in $w^*$, and therefore $\B_{w^*}$-measurable. Consequently, $\widetilde{\sigma} \subset \B_{w^*}$. 

Conversely, we know that for every $f \in \C(\Omega, H)$ and $r > 0$, the set \[\{ u \in \M(\Omega, H): \pair{u, f} < r\}\] is $\widetilde{\sigma}$-measurable. Hence, let $u_0 \in X$ be given. For a finite set $\{ f_1, f_2, \ldots, f_k\} \subset X_*$ and $r > 0$, the set
\begin{align*}
	V = V(u_0; f_1,f_2,\ldots, f_k; r) := \left\{ u \in X: | \pair{u - u_0, f_i}| < r, \,\,\text{ for all } i = 1,\ldots, k\right\},
\end{align*}
defining a neighborhood of $u_0$ for the weak* topology, is also $\sigma_1$-measurable. As the sets of this type form a basis of the weak* topology, this implies $\B_{w^*} \subset \widetilde{\sigma}$. In conclusion, we have $\B_{w^*} = \widetilde{\sigma}$ and the proof is complete.
\end{proof}
%
%
%

With the given $\sigma$-algebra on $\M(\Omega, H)$, we are ready to introduce several measurable maps from and to $\M(\Omega, H)$ which will be useful in the sequel. In the following, we denote by $(W, \A)$ a measurable space. We first introduce measurability properties of maps from $W$ to $\M(\Omega, H)$. For details, we refer to \cite[Chapter II]{diestel_1977}, \cite{casas_clason_kunish_2013} and the references therein.

\begin{definition}Consider a map $U: W \to \M(\Omega, H)$.
	\begin{enumerate}
		\item[(1)] $U$ is \textit{weakly* measurable} if for every $f \in \C(\Omega, H)$, the map 
		\[ w \mapsto \pair{U(w), f},\quad w \in W \]
		is measurable from $W$ to $\R$ with its Borel $\sigma$-algebra.
		\item[(2)] $U$ is \textit{$\B_{w^*}$-measurable} if it is a measurable map to $(\M(\Omega, H), \B_{w^*})$.
	\end{enumerate}
\end{definition}
In fact, the two notions of measurability are equivalent, as follows directly from the definitions of the two coinciding $\sigma$-algebras on $\M(\Omega, H)$.

\begin{proposition}\label{prop:equiv_measurability} A map $U : W \to \M(\Omega,H)$ is weakly* measurable if and only if it is $\B_{w^*}$-measurable.
\end{proposition}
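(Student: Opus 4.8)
The plan is to invoke the standard principle that a map taking values in a space equipped with the $\sigma$-algebra generated by a family of functions is measurable if and only if each of those functions, post-composed with the map, is measurable. The only ingredient specific to our situation is the identification of the generators of $\B_{w^*}$ already carried out in Proposition~\ref{prop:equivalence_sigma}, so the two implications will be essentially bookkeeping.

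First I would prove that $\B_{w^*}$-measurability implies weak* measurability. If $U$ is $\B_{w^*}$-measurable, then for each fixed $f \in \C(\Omega, H)$ the linear functional $u \mapsto \pair{u, f}$ is $w^*$-continuous, hence measurable from $(\M(\Omega, H), \B_{w^*})$ to $\R$ with its Borel $\sigma$-algebra; composing with $U$ shows that $w \mapsto \pair{U(w), f}$ is $\A$-measurable. Since $f$ was arbitrary, $U$ is weakly* measurable.

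For the converse, suppose $U$ is weakly* measurable and set $\mathcal{E} := \{ E \subseteq \M(\Omega, H) : U^{-1}(E) \in \A \}$. Because preimages commute with complements and countable unions and intersections, $\mathcal{E}$ is a $\sigma$-algebra on $\M(\Omega, H)$. For every $f \in \C(\Omega, H)$ and every Borel set $C \subseteq \R$, weak* measurability of $U$ gives $U^{-1}(\{ u : \pair{u, f} \in C \}) = (w \mapsto \pair{U(w), f})^{-1}(C) \in \A$, so $\{ u : \pair{u, f} \in C \} \in \mathcal{E}$. Thus $\mathcal{E}$ contains the $\sigma$-algebra generated by the functionals $u \mapsto \pair{u, f}$, $f \in \C(\Omega, H)$, which by Proposition~\ref{prop:equivalence_sigma} is precisely $\B_{w^*}$. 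Hence $U^{-1}(E) \in \A$ for all $E \in \B_{w^*}$, i.e.\ $U$ is $\B_{w^*}$-measurable.

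I do not expect a genuine obstacle here: the substantive step — showing that $\B_{w^*}$ is generated by the evaluation functionals $u \mapsto \pair{u,f}$ — has been isolated in Proposition~\ref{prop:equivalence_sigma}, and what remains is the elementary generating-class argument for measurability into a space whose $\sigma$-algebra is generated by a family of real-valued maps. The only point requiring a line of care is that this generating-class argument works with arbitrary (not merely countable) families of functionals, which causes no difficulty since $\mathcal{E}$ above is a $\sigma$-algebra containing each generator individually.
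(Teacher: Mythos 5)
Your proof is correct, and both directions ultimately rest on the same foundation as the paper's, but your treatment of the converse is organized differently and is in fact tighter. The paper's converse verifies that the preimages of the basic weak* neighborhoods $V(u_0;f_1,\ldots,f_k;r)$ are measurable and then asserts measurability of $U$; strictly speaking, measurability of preimages of a \emph{basis} of a topology does not by itself yield measurability with respect to the Borel $\sigma$-algebra unless one knows the basis generates that $\sigma$-algebra, which is exactly the content of Proposition~\ref{prop:equivalence_sigma}. You instead run the standard good-sets argument: the collection $\mathcal{E}=\{E: U^{-1}(E)\in\A\}$ is a $\sigma$-algebra containing the generators $u\mapsto\pair{u,f}$, hence contains $\B_{w^*}$ by Proposition~\ref{prop:equivalence_sigma}. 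This makes the logical dependence on Proposition~\ref{prop:equivalence_sigma} explicit and closes the small gap left implicit in the paper's wording; your closing remark that the argument works for an arbitrary (uncountable) family of generators is also the right point to flag. The forward direction is identical to the paper's.
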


\begin{proof} Assume that $U$ is $\B_{w^*}$-measurable. For every $f \in \C(\Omega, H)$ and $u \in \M(\Omega, H)$, the map $u \mapsto \pair{u, f}$ is measurable. Hence, as a composition of measurable maps, the map $w \mapsto \pair{U(w), f}$ is measurable. Hence, $U$ is weakly* measurable.
	
Conversely, assume that $U$ is weakly* measurable. Hence, for every $f \in \C(\Omega, H)$ and $r > 0$, the set
\begin{align*}
	W(f; r) := \left\{ w \in W: |\pair{U(w), f}| < r\right\}
\end{align*}
is measurable. Hence, let $u_0 \in \M(\Omega, H)$. For every $f_1,\ldots, f_k \in C(\Omega, H)$ and $r > 0$, the set
\begin{align*}
	W(u_0; f_1,\ldots, f_k; r) := \left\{ w \in W: \left|\pair{U(w) - u_0, f_i} \right| < r   \,\,\text{ for all } i = 1,\ldots, k\right\}
\end{align*}
is measurable. This set is in fact $U^{-1}\left(V(u_0;f_1,\ldots, f_k; r)\right)$, and thus implies the measurability of $U$ as a map from $W$ to $\M(\Omega, H)$. The proof is complete.
\end{proof}


Hence, in the following, by a measurable map $U: W \to \M(\Omega, H)$ or $U: \M(\Omega, H) \to W$, we mean it is measurable with respect to $\B_{w^*}$ (equivalently, it is also weakly* measurable).

\begin{corollary} Let $Q: W \to H$ and $Y: W \to \Omega$ be measurable maps. Then the map $U : W \to \M(\Omega, H)$ given by $U(w) = Q(w) \delta_{Y(w)}$ is measurable.
\end{corollary}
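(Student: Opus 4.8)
The plan is to reduce, via Proposition~\ref{prop:equiv_measurability}, to proving that $U$ is weakly* measurable, i.e.\ that for each fixed $f \in \C(\Omega, H)$ the real-valued map $w \mapsto \pair{U(w), f}$ is $\A$-measurable. First I would evaluate this pairing explicitly. For a Dirac measure $q\delta_y$ with $q \in H$ and $y \in \Omega$, the polar decomposition gives (for $q\neq 0$) $|q\delta_y| = \norm{q}_H\,\delta_y$ with density $(q\delta_y)' \equiv q/\norm{q}_H$ at $y$, so the representation of the duality pairing recalled in Section~\ref{sec:5_vectormeasures} yields $\pair{q\delta_y, f} = (f(y), q)_H$; the case $q = 0$ being trivial. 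Hence $\pair{U(w), f} = \bigl(f(Y(w)), Q(w)\bigr)_H$, and it remains to show that the right-hand side is a measurable function of $w$.

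For this, note that $f\colon \Omega \to H$ is continuous, hence Borel measurable, so the composition $f \circ Y\colon W \to H$ is $\A$-measurable as a composition of measurable maps. Together with the measurability of $Q$, the map $w \mapsto \bigl(f(Y(w)), Q(w)\bigr)$ is measurable from $W$ into $H \times H$ equipped with the product of the Borel $\sigma$-algebras. Here I would invoke separability of $H$: since $H$ is separable, $\B(H) \otimes \B(H) = \B(H \times H)$, so this map is in fact Borel measurable into $H\times H$. Composing it with the inner product $(\cdot,\cdot)_H\colon H\times H \to \R$, which is continuous and therefore Borel measurable, yields the desired measurability of $w \mapsto \pair{U(w), f}$. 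Applying Proposition~\ref{prop:equiv_measurability} then finishes the proof.

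The argument is essentially routine; the only point requiring a little care --- and the place where the hypothesis that $H$ is a separable Hilbert space is used --- is the identification $\B(H)\otimes\B(H) = \B(H\times H)$, needed to turn joint measurability with respect to the product $\sigma$-algebra into Borel measurability on $H\times H$, so that it can be composed with the (topologically, not coordinatewise) continuous inner product. Alternatively, one may avoid this by fixing an orthonormal basis $\{e_n\}_{n\in\N}$ of $H$ and writing $\bigl(f(Y(w)), Q(w)\bigr)_H$ as the pointwise limit of $\sum_{n \le N} \bigl(f(Y(w)), e_n\bigr)_H \bigl(e_n, Q(w)\bigr)_H$, each summand being a product of two scalar measurable functions; this makes the measurability manifest without appealing to the product-$\sigma$-algebra identity.
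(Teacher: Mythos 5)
Your proposal is correct and follows essentially the same route as the paper: reduce to weak* measurability via Proposition~\ref{prop:equiv_measurability}, identify $\pair{U(w),f} = (Q(w), f(Y(w)))_H$, and conclude from the measurability of $Q$, $Y$ and the continuity of the inner product. The only difference is that you spell out the product-$\sigma$-algebra/separability point (and a basis-expansion alternative) that the paper's proof passes over in one line; this is a welcome clarification but not a different argument.
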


\begin{proof} For every $f \in \C(\Omega, H)$, we consider the map $\varphi_f : W \to \R$ given by
	\begin{align*}
		\varphi_f(w)	 = \dual{ U(w), f}  = (Q(w), f(Y(w)))_H, \quad \text{ for all } w \in W.
	\end{align*}
	By the measurability of $Q$ and $Y$, and the continuity of the inner product $(\cdot, \cdot)_H$, it follows that $\varphi_f$ is measurable for every $f \in \C(\Omega, H)$. Hence, by Proposition~\ref{prop:equiv_measurability}, the map $U$ is measurable.
\end{proof}

We next study some measurable maps mapping from $\M(\Omega, H)$.

\begin{proposition}Let $K$ be a separable Hilbert space. Then a map $\G: \M(\Omega, H) \to K$is measurable if and only if the map $u \mapsto (\G(u), g)_H$ is measurable for every $g$ in $K$.
\end{proposition}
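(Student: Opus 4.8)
The plan is to reduce the claim to the Hilbert-space analogue of Proposition~\ref{prop:equivalence_sigma}: that the Borel $\sigma$-algebra $\B(K)$ coincides with the $\sigma$-algebra $\mathcal{E}$ on $K$ generated by the continuous linear functionals $y \mapsto (y,g)_K$, $g \in K$. Once this identity is available, both implications are immediate. For necessity, if $\G$ is $(\B_{w^*},\B(K))$-measurable, then for each fixed $g \in K$ the map $u \mapsto (\G(u),g)_K$ is the composition of $\G$ with the norm-continuous (hence Borel) functional $(\cdot,g)_K$, so it is measurable. For sufficiency, the hypothesis says exactly that $\G^{-1}\big((\cdot,g)_K^{-1}(B)\big) \in \B_{w^*}$ for every $g \in K$ and $B \in \B(\R)$; since the family $\{\,A \in \B(K) : \G^{-1}(A) \in \B_{w^*}\,\}$ is a $\sigma$-algebra containing these generators of $\mathcal{E}$, it contains $\mathcal{E} = \B(K)$, and thus $\G$ is measurable.

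It therefore remains to establish $\mathcal{E} = \B(K)$. The inclusion $\mathcal{E} \subseteq \B(K)$ is clear, since each $(\cdot,g)_K$ is continuous on $K$. For the reverse inclusion I would use the separability of $K$ twice. First, fix a countable set $\{g_n\}_{n\in\N}$ dense in the closed unit ball of $K$; then $\|y\|_K = \sup_{\|g\|_K \le 1}(y,g)_K = \sup_{n\in\N}(y,g_n)_K$ for every $y \in K$, so that for arbitrary $a \in K$ and $r \ge 0$,
\[
\overline{B}(a,r) = \{\, y \in K : \|y-a\|_K \le r \,\} = \bigcap_{n\in\N}\{\, y \in K : (y,g_n)_K \le r + (a,g_n)_K \,\} \in \mathcal{E},
\]
being a countable intersection of half-spaces that lie in $\mathcal{E}$. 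Second, since $K$ is a separable metric space, $\B(K)$ is generated by the countable collection of closed balls $\overline{B}(a_j,\rho)$, with $\{a_j\}$ a countable dense subset of $K$ and $\rho$ a positive rational. Combining this with the previous display yields $\B(K) \subseteq \mathcal{E}$, hence equality.

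The only genuinely nonroutine point is the identity $\mathcal{E} = \B(K)$, and the step deserving the most care is the twofold use of separability of $K$ — to express the norm of $K$ as a countable supremum of linear functionals, and to reduce $\B(K)$ to the $\sigma$-algebra generated by countably many closed balls. Note that it is the separability of the target space $K$, not of $\M(\Omega,H)$, that makes the argument go through; this mirrors exactly the situation of Proposition~\ref{prop:equivalence_sigma}, where the same device was applied to the weak* topology on $\M(\Omega,H)$ via the separating family $\C(\Omega,H)$.
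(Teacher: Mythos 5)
Your proposal is correct and follows essentially the same route as the paper: both directions reduce to the fact that the $\sigma$-algebra generated by the functionals $(\cdot,g)_K$ coincides with $\B(K)$ on the separable Hilbert space $K$. The paper simply asserts that the half-spaces generate $\B(K)$, whereas you supply the justification (countable sup representation of the norm, closed balls, separability of $K$), which is a welcome elaboration rather than a different argument.
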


\begin{proof}We proceed as in the proof of Proposition~\ref{prop:equivalence_sigma}. If $\G$ is measurable, then the map $u \mapsto (\G(u), g)_K$ is measurable as it is the composition of a measurable map and a continuous map. Conversely, assume that the map $u \mapsto (\G(u), g)_K$ is measurable for every $g$. Hence the set $\G^{-1}(V(g_0, r))$ is measurable for every $g_0 \in K$ and $r > 0$, where
\begin{align*}
V(g_0, r) := \left\{ g \in K: (g, g_0)_K < r\right\} 
\end{align*}
is measurable. Since these sets form the Borel $\sigma$-algebra on $K$, we conclude that $\G$ is measurable. The proof is complete.
\end{proof}
	
\begin{corollary}Let $K$ be a Hilbert space and $\mathcal{G}: \M(\Omega, H) \to K$ be a weak*-to-weak continuous map. Then $\mathcal{G}$ is measurable.
\end{corollary}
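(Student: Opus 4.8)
The plan is to obtain the statement as a corollary of the preceding proposition, which reduces measurability of a map into a \emph{separable} Hilbert space to measurability of the associated scalar maps. The starting point is the observation that, for each fixed $g \in K$, the functional $k \mapsto (k,g)_K$ is continuous on $K$ endowed with its weak topology; composing it with $\G$ shows that $u \mapsto (\G(u),g)_K$ is continuous from $(\M(\Omega,H), w^*)$ to $\R$, hence in particular $\B_{w^*}$-measurable. When $K$ is separable this is all that is needed: the preceding proposition then yields directly that $\G$ is measurable.

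For a general (possibly non-separable) Hilbert space $K$, I would first cut the target down to a separable closed subspace. Recall from Section~\ref{sec:5_vectormeasures} that $(\M(\Omega,H), w^*)$ is a Souslin space, so it is the continuous image of a separable, completely metrizable space $S$ under some map $\psi \colon S \to \M(\Omega,H)$. Then $\G \circ \psi$ is continuous from $S$ into $K$ equipped with its weak topology, so the range $\G(\M(\Omega,H))$ is separable for the weak topology; choosing a countable weakly dense subset $D$ of it and letting $K_0$ be the closed linear span of $D$ in $K$, the fact that a closed linear subspace is weakly closed gives $\G(\M(\Omega,H)) \subset K_0$, and $K_0$ is a separable Hilbert space. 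Viewing $\G$ as a map into $K_0$, for every $g \in K_0$ the scalar map $u \mapsto (\G(u),g)_{K_0} = (\G(u),g)_K$ is $\B_{w^*}$-measurable by the first step, so the preceding proposition (applied with $K_0$ in place of $K$) gives that $\G \colon \M(\Omega,H) \to K_0$ is measurable; composing with the norm-continuous, hence Borel, inclusion $K_0 \hookrightarrow K$ — note that $K_0$ is a Borel subset of $K$, being closed — finishes the argument.

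The only point requiring real care is this passage to a separable range: one must be sure that \emph{weak} separability of the range upgrades to the statement that $\G$ takes values in a \emph{norm}-separable closed subspace, and this is exactly where one uses that closed subspaces of $K$ are weakly closed, together with the Souslin property of $\M(\Omega,H)$ that supplies the weak separability in the first place. Everything else — continuity of $(\cdot,g)_K$ for the weak topology, Borel measurability of continuous maps, and the compatibility of the Borel structures under the inclusion $K_0 \hookrightarrow K$ — is routine. If one is content to state the corollary only for separable $K$, which already covers the situations of interest later in the paper, the second paragraph can be dropped and the result is immediate from the preceding proposition.
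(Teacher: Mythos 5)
Your argument is correct, and its core coincides with the paper's one-sentence proof: for each $g \in K$ the map $u \mapsto (\G(u),g)_K$ is weak*-continuous by hypothesis, hence $\B_{w^*}$-measurable, and the preceding proposition upgrades this to measurability of $\G$. Where you go beyond the paper is in noticing that the preceding proposition is stated only for \emph{separable} $K$, while the corollary drops that hypothesis, and in supplying the missing reduction: since $(\M(\Omega,H),w^*)$ is Souslin, the range of $\G$ is weakly separable, hence contained in the norm-closed (therefore weakly closed) span $K_0$ of a countable set, which is a separable Hilbert space to which the proposition applies; the closed inclusion $K_0 \hookrightarrow K$ then transports Borel measurability. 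This extra step is not cosmetic — the equivalence of the cylindrical and Borel $\sigma$-algebras used in the proposition genuinely fails for non-separable Hilbert spaces — so your version actually justifies the corollary as literally stated, whereas the paper's proof implicitly assumes $K$ separable (which suffices for all the applications in Section 5, where $K = \R^{N_o}$ or $\mathbb{C}^{N_o}$).
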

This follows from the fact that the map $u \mapsto (\G(u), g)_K$ is weak* continuous for every $g \in K$ and therefore measurable.

\begin{proposition}The norm map $\norm{\cdot}: \M(\Omega, H) \to \R$,  $u \mapsto \norm{u}_{\M(\Omega, H)}$ is measurable.
\end{proposition}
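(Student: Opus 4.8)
The plan is to replace the uncountable supremum defining $\norm{u}_{\Mm}$ by a countable one, after which measurability follows from the stability of measurability under countable suprema. Recall that, since $\Omega$ is the closure of a bounded domain in $\R^d$, it is compact, and hence $\C(\Omega, H) = C(\Omega, H)$ is a \emph{separable} Banach space: a continuous $H$-valued function on the compact metric space $\Omega$ is uniformly approximable by finite sums $\sum_j \phi_j(\cdot)\, h_j$ with $\phi_j$ ranging over a countable dense subset of $C(\Omega,\R)$ and $h_j$ over a countable dense subset of $H$. Consequently the closed unit ball $B_1 := \{ f \in \C(\Omega, H) : \sup_{x \in \Omega} \norm{f(x)}_H \le 1\}$, being a subset of a separable metric space, admits a countable dense subset $\{f_n\}_{n \in \N} \subset B_1$.

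The next step is to verify the identity $\norm{u}_{\Mm} = \sup_{n \in \N} \pair{u, f_n}$ for every $u \in \M(\Omega, H)$. The inequality ``$\ge$'' is immediate from the dual characterization of $\norm{\cdot}_{\Mm}$ recalled above, since each $f_n \in B_1$. For ``$\le$'', fix $\eps > 0$ and choose $f \in B_1$ with $\pair{u, f} > \norm{u}_{\Mm} - \eps$; picking $f_n$ with $\sup_{x\in\Omega}\norm{f(x) - f_n(x)}_H$ sufficiently small and using $\abs{\pair{u, f - f_n}} \le \norm{u}_{\Mm}\, \sup_{x\in\Omega}\norm{f(x) - f_n(x)}_H$, we obtain $\pair{u, f_n} > \norm{u}_{\Mm} - 2\eps$. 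Letting $\eps \downarrow 0$ gives the claim.

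Finally, by Proposition~\ref{prop:equivalence_sigma} each linear functional $u \mapsto \pair{u, f_n}$ is $\B_{w^*}$-measurable (indeed it is weak* continuous by the very definition of the weak* topology). Since $\norm{\cdot}_{\Mm}$ is now exhibited as a pointwise supremum of countably many real-valued $\B_{w^*}$-measurable functions, it is itself $\B_{w^*}$-measurable, which is the assertion. As an alternative route, one may observe that $\norm{\cdot}_{\Mm}$, being a supremum of weak*-continuous functions, is weak* lower semicontinuous, so that $\{ u : \norm{u}_{\Mm} > c\}$ is weak* open for every $c$ — equivalently, closed balls of $\M(\Omega,H)$ are weak* closed by Banach--Alaoglu — and hence lies in $\B_{w^*}$. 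The only point that requires genuine care is the separability of $\C(\Omega, H)$ and the density approximation in the second step; the remaining arguments are routine.
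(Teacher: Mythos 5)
Your proof is correct and follows essentially the same route as the paper: both exploit the separability of $\C(\Omega,H)$ (from compactness of $\Omega$) to pick a countable dense subset of the unit ball, rewrite the dual norm as a countable supremum of weak*-continuous (hence $\B_{w^*}$-measurable) linear functionals, and conclude by stability of measurability under countable suprema. Your explicit verification of the density step and the remark on weak* lower semicontinuity are fine additions but do not change the argument.
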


\begin{proof}First, notice that the space $\C(\Omega,H)$ is separable since $\Omega$ is compact. Denote by $S$ a countable subset of $C(\Omega,H)$ such that $\sup_{ x \in \Omega} \norm{f(x)}_H \le 1$ for all $f \in S$ and $S$ is dense in the unit ball of $C(\Omega,H)$. Since for every $f \in S$,  every map $u \mapsto \pair{u, f}$, is measurable by the definition, the map $u \mapsto \sup_{f \in B} \pair{u, f}$ is also measurable.  By the definition of the dual norm, one has
	\[ \norm{u}_{\M(\Omega,H)} = \sup_{\norm{f} \le 1} \pair{u, f} = \sup_{f \in S} \pair{u, f}.  \] 
	Since the supremum of a countable set of measurable functions is also measurable, we conclude that the map $u \mapsto \norm{u}_{\M(\Omega,H)}$ is measurable. The proof is complete.
\end{proof}

\begin{remark}\label{rem:strongvsweak} While the norm map is measurable, we remark that there exists a set that is measurable in $\B$ but not in $\B_{w^*}$. Indeed, we simply consider the space of real-valued Radon measures $\M(\Omega)$ and let $E$ be a non-Borel measurable {\Truong subset} of $\Omega$. On the space $\Omega$, we consider the set $\M_E := \left\{ \delta_x : x \in E\right\}$. Since for every $\delta_{x_1},\delta_{x_2} \in \M_E$, one has
\[ \norm{\delta_{x_1} - \delta_{x_2}}_{\M} = 2,\quad \forall x_1 \neq x_2,\] the set includes all of its limit points and \Truong{is} therefore closed in the strong topology. In particular, it is measurable in $\B$.
On the other hand, the map $\varphi: \Omega \to \M(\Omega)$, $\varphi(x) = \delta_x$ is weakly measurable by Proposition~\ref{prop:equiv_measurability}. Hence, the set $\M_E$ is not weakly measurable, otherwise $\varphi^{-1}(\M_E) = E$ would be measurable, which is a contradiction. Hence, we conclude that the two $\sigma$-algebras are not equivalent on $\M(\Omega)$. 

\end{remark}

For simplicity, and when no confusion arises, we will also write $\Mm$ and $\Cc$ to denote $\M(\Omega, H)$ and $\C(\Omega, H)$, respectively. 

\section{Prior distribution on the space of measures} \label{sec:5_priordistribution}

Following the discussion in the previous section, we are now able to define prior distributions,  or more precisely prior measures, on the space $\M(\Omega,  H)$, which represents our initial beliefs about the model parameters before observing any data. In the following, we explore several examples of prior measures, illustrating their properties and the motivations behind their choices in different modeling scenarios.

\subsection{Random measures} Central objects in defining prior measures on the space of measures are so-called random measures. More precisely, consider a probability space $(\Theta, \mathcal{F}, \mathbb{P})$. A random measure on $\M(\Omega, H)$ is a (weakly*) measurable map $U\colon \Theta \to \M(\Omega, H)$. This induces a Borel probability measure $\mu_{\pr}$ on $\M(\Omega, H)$ given by
\begin{align}\label{eq:prior_measure}
	\mu_{\pr}(E) := \P(U(\omega) \in E), \quad E \in \B_{w^*}.
\end{align}
In addition, since $\M(\Omega, H)$ is a Souslin space \cite{saint_1978}, every Borel measure is Radon \cite[Theorem 8.6.13]{cohn_2013}, meaning that every finite Borel measure on it is inner regular. Furthermore, one has the following characterization of Borel probability measures on $\M(\Omega, H)$:
\begin{proposition}[{\cite[Theorem 7.4.3]{Bogachev_2007}}] Every Borel measure $\mu$ on $\M(\Omega, H)$ is Radon and is concentrated on a countable union of metrizable compact sets.  In addition, for every Borel set $B$ and every $\varepsilon > 0$,  there exists a metrizable compact set $K_{\varepsilon} \subset B$ such that $|\mu|(B \backslash K_{\varepsilon}) < \varepsilon$.
\end{proposition}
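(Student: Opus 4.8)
The plan is to exploit the concrete predual structure of $\M(\Omega,H)$ rather than abstract Souslin theory, the latter being needed only for the Radon property. Recall that $\M(\Omega,H)=\C(\Omega,H)^*$ and that $\C(\Omega,H)$ is separable because $\Omega$ is compact and $H$ is separable. Set $B_n:=\{u\in\M(\Omega,H):\norm{u}_{\M(\Omega,H)}\le n\}$. By the Banach--Alaoglu theorem each $B_n$ is weak*-compact, and since the predual $\C(\Omega,H)$ is separable, each $B_n$ is moreover weak*-metrizable --- one may take $d(u,v)=\sum_{k}2^{-k}\abs{\pair{u-v,f_k}}$ for a dense sequence $\{f_k\}$ in the unit ball of $\C(\Omega,H)$. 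Since every element of $\M(\Omega,H)$ has finite total variation, one has the set-theoretic identity $\M(\Omega,H)=\bigcup_{n\in\N}B_n$; hence $\M(\Omega,H)$ --- and therefore any measure on it --- is concentrated on a countable union of metrizable compact sets, which is the first assertion. (That $(\M(\Omega,H),w^*)$ is Souslin and hence every finite Borel measure on it is Radon is already recorded above via \cite{saint_1978} and \cite[Theorem 8.6.13]{cohn_2013}.)

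For the refined inner-regularity statement, fix a finite Borel measure $\mu$ (in our applications a probability measure), a Borel set $B$, and $\varepsilon>0$. Each $B_n$, being weak*-compact in a Hausdorff space, is weak*-closed, so $B_n\in\B_{w^*}$, and by metrizability of $B_n$ the trace of $\B_{w^*}$ on $B_n$ is exactly the Borel $\sigma$-algebra of the compact metric space $B_n$. Since $\mu$ is finite and $B\setminus B_n\downarrow\varnothing$, I choose $n$ with $\mu(B\setminus B_n)<\varepsilon/2$. The restriction $\mu|_{B_n}$ is a finite Borel measure on the compact metric space $B_n$, hence inner regular with respect to closed --- equivalently compact --- subsets of $B_n$; applied to the Borel set $B\cap B_n$ it produces a compact $K_\varepsilon\subseteq B\cap B_n\subseteq B$ with $\mu\bigl((B\cap B_n)\setminus K_\varepsilon\bigr)<\varepsilon/2$, and $K_\varepsilon$ is metrizable as a subset of the metrizable $B_n$. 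Then $\mu(B\setminus K_\varepsilon)\le\mu(B\setminus B_n)+\mu\bigl((B\cap B_n)\setminus K_\varepsilon\bigr)<\varepsilon$. Specializing to $B=\M(\Omega,H)$ re-derives the Radon property directly; for a signed $\mu$ one applies the argument to the positive and negative parts of the Jordan decomposition, equivalently to the finite nonnegative measure $\abs{\mu}$, which is why the statement is phrased in terms of $\abs{\mu}$.

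I do not anticipate a substantial obstacle: this is \cite[Theorem 7.4.3]{Bogachev_2007} specialized to the Souslin space $\M(\Omega,H)$, and in the present setting the argument is short precisely because of the dual-ball structure. The only points requiring a little care are recognizing that $\M(\Omega,H)$ is a separable dual --- so that Banach--Alaoglu applies and the dual balls are weak*-metrizable --- and the elementary bookkeeping identifying the trace $\sigma$-algebra on $B_n$ with its metric Borel structure. Should one wish to avoid the dual-ball route, a purely topological alternative works for any Souslin space: such a space has a countable network, hence so does every compact subset, and a compact Hausdorff space with a countable network is metrizable; combined with Radonness this yields the result in full generality.
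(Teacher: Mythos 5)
Your proof is correct, and it is worth noting that the paper offers no proof at all for this proposition: it is stated as a direct citation of \cite[Theorem 7.4.3]{Bogachev_2007}, which is a general theorem about Radon measures on Souslin spaces. Your argument instead exploits the concrete structure of $\M(\Omega,H)$ as the dual of the separable space $\C(\Omega,H)$: the closed dual balls $B_n$ are weak*-compact by Banach--Alaoglu and weak*-metrizable by separability of the predual, they exhaust $\M(\Omega,H)$, and the refined inner-regularity claim then reduces, after the exhaustion step $\mu(B\setminus B_n)\to 0$, to the standard inner regularity of finite Borel measures on compact metric spaces. All the individual steps check out: the $B_n$ are weak*-closed hence in $\B_{w^*}$, the trace of $\B_{w^*}$ on $B_n$ is the Borel $\sigma$-algebra of the subspace topology (the collection of sets whose trace is Borel in $B_n$ is a $\sigma$-algebra containing the weak*-open sets), and compactness and metrizability of $K_\varepsilon$ are inherited from $B_n$. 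What your route buys is a self-contained and elementary proof that also makes transparent \emph{why} the compact sets can be taken metrizable in this nonmetrizable space, and, as you observe, it re-derives the Radon property by taking $B=\M(\Omega,H)$, so the appeal to the Souslin machinery of \cite{saint_1978} and \cite[Theorem 8.6.13]{cohn_2013} becomes optional rather than essential. The only caveats, which you already flag, are that the argument as written requires $\mu$ finite (or at least $\sigma$-finite along the $B_n$) so that continuity from above applies, and that signed measures are handled through $\abs{\mu}$; both are consistent with how the proposition is used in the paper, where $\mu$ is a probability measure.
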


Nevertheless, sampling a general random measure is nontrivial due to the non-separability of the space. This presents a challenging task in practical computation. To address this issue, it is necessary to consider a subset of random measures that not only ensures the well-posedness of the Bayesian inverse problem but also enables efficient sampling and numerical implementation. This motivates the development of structured priors or parametrizations that restrict the space of random measures to a computationally tractable class, while still capturing the essential features of the underlying inverse problem. As we have seen, since we are interested in parameters of the form \eqref{eq:groundtruth_form}, we consider the so-called class of \textit{point processes} which is defined as follows:

\begin{definition}
	Let $K$ be a random variable on $\mathbb{N}$, $\{Y_k\}_{k \in \mathbb{N}}$ a sequence of random variables $Y_k\colon\Theta \to \Omega$, and $\{Q_k\}_{k \in \mathbb{N}}$ a sequence of i.i.d. $H$-valued random variables. We consider the point process of the form
	\begin{equation}\label{eq:radon_random_measure}
		U \sim \sum_{k=1}^{K} \gamma_k Q_k \delta_{Y_k},
	\end{equation}
	where $\{\gamma_k\}_{k \in \N}$ is a fixed sequence of positive coefficients that decay sufficiently fast. Here, certain conditions on $\{\gamma_k\}_{k \in \mathbb{N}}$ and $\{ Q_k\}_{k \in \mathbb{N}}$ are needed to ensure that the measure $\mu_{\pr}$, defined as the distribution of $U$ given in \eqref{eq:prior_measure}, is a well-defined measure on $\M(\Omega, H)$.
\end{definition}

 We remark that here we do not assume that $K$ is independent of $\{(Y_k, Q_k)\}_{k \in \mathbb{N}}$. Without any confusion, we also write $u$ to denote $U$. The expression in \eqref{eq:radon_random_measure} is well-defined as a random measure on $\M(\Omega, H)$ according to the following result.

\begin{proposition}Let $u$ be given in \eqref{eq:radon_random_measure}.
\begin{enumerate}
	\item[(1)] If $K < \infty$ almost surely, then \eqref{eq:radon_random_measure} is a well-defined random measure on $\M(\Omega, H)$ for every sequence $\{\gamma_k\}_{k \in \mathbb{N}}$.
	\item[(2)] If $K = \infty$ almost surely, then \eqref{eq:radon_random_measure} is a well-defined random measure on $\M(\Omega, H)$ if $\{ |\gamma_k|^2\}_{k \in \mathbb{N}} \in \ell^p$ and $\{\var \norm{Q_k}_H \}_{k \in \mathbb{N}} \in \ell^q$ with $1/p + 1/q = 1$.
\end{enumerate}
\end{proposition}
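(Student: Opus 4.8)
The plan is to show that the partial sums $U_n = \sum_{k=1}^{n} \gamma_k Q_k \delta_{Y_k}$ converge in $\M(\Omega,H)$ to a well-defined random element, and that the limit is $\B_{w^*}$-measurable. The key observation is that for a discrete measure of the form \eqref{eq:radon_random_measure}, the $\M(\Omega,H)$-norm is simply $\sum_{k=1}^{K} \gamma_k \norm{Q_k}_H$, since the Diracs are at (a priori distinct, but in any case the triangle inequality gives the upper bound) points $Y_k$. So the whole question reduces to showing that $\sum_{k=1}^{\infty} \gamma_k \norm{Q_k}_H < \infty$ almost surely in case (2), because then the tail sums $\sum_{k>n} \gamma_k Q_k \delta_{Y_k}$ have $\M$-norm tending to zero a.s., hence $U_n$ is a.s. Cauchy in the Banach space $\M(\Omega,H)$ and converges to some $U \in \M(\Omega,H)$.

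For case (1), when $K<\infty$ a.s., there is essentially nothing to do: the sum is finite a.s., each summand $\gamma_k Q_k \delta_{Y_k}$ is a measurable map $\Theta \to \M(\Omega,H)$ by the Corollary (measurability of $w \mapsto Q(w)\delta_{Y(w)}$), and a finite sum of measurable maps into $\M(\Omega,H)$ is measurable because addition is $w^*$-continuous on $\M(\Omega,H)$; so $U = \sum_{k=1}^{K}\gamma_k Q_k \delta_{Y_k}$ is weakly* measurable, hence a random measure. One should be slightly careful that $K$ is random, so one writes $U = \sum_{k=1}^{\infty} \mathbbm{1}_{\{k \le K\}} \gamma_k Q_k \delta_{Y_k}$ and checks that each term is measurable (the event $\{k \le K\}$ is measurable) and that the series is a.s. a finite sum.

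For case (2), I would bound $\E\big[\sum_{k=1}^{\infty} \gamma_k \norm{Q_k}_H\big]$ or, better, argue via a suitable moment. Since the $Q_k$ are i.i.d., $\E\norm{Q_k}_H$ is a constant; but the hypothesis is phrased in terms of $\var\norm{Q_k}_H$ and an $\ell^q$ condition with the conjugate exponent to $p$ where $\{\gamma_k^2\} \in \ell^p$. This suggests applying Hölder's inequality: $\sum_k \gamma_k^2 \var\norm{Q_k}_H \le \norm{\{\gamma_k^2\}}_{\ell^p}\,\norm{\{\var\norm{Q_k}_H\}}_{\ell^q} < \infty$, which controls $\sum_k \var(\gamma_k \norm{Q_k}_H)$, and combined with summability of the means (which follows since $\{\gamma_k\}\in\ell^2$, as $\{\gamma_k^2\}\in\ell^p \subset \ell^\infty$ forces nothing directly — here one needs $\{\gamma_k^2\}\in\ell^p$ to give $\{\gamma_k\}\in\ell^{2p}\subset\ell^2$ only if $2p \ge 2$, i.e. always, wait that needs $\ell^{2p}\subset\ell^2$ which holds since $2p\ge 2$) gives that $\sum_k \gamma_k\norm{Q_k}_H$ has finite mean and variance, hence is a.s. finite by monotone convergence and Chebyshev. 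Then Kolmogorov's three-series theorem or a direct $L^2$/monotone-convergence argument closes it. Finally, the a.s. limit $U$ of the measurable partial sums $U_n$ is measurable: for each fixed $f \in \C(\Omega,H)$, $\pair{U_n,f} \to \pair{U,f}$ a.s. (since $|\pair{U_n - U, f}| \le \norm{U_n-U}_{\M}\sup_x\norm{f(x)}_H$), so $w\mapsto\pair{U(w),f}$ is an a.s. limit of measurable functions, hence measurable, and by Proposition~\ref{prop:equiv_measurability} $U$ is $\B_{w^*}$-measurable. The main obstacle is getting the moment bookkeeping in case (2) exactly right — in particular tracking how the $\ell^p$ assumption on $\{\gamma_k^2\}$ interacts with the i.i.d. structure and the variance hypothesis via Hölder, and making sure the mean part of $\sum_k\gamma_k\norm{Q_k}_H$ is also summable, which requires $\{\gamma_k\}\in\ell^1$; this in turn should follow from $\{\gamma_k^2\}\in\ell^p$ together with the stated "sufficiently fast decay", so I would make that dependence explicit.
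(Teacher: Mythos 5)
Your proposal is correct and follows essentially the same route as the paper: H\"older's inequality controls $\sum_k |\gamma_k|^2 \var\norm{Q_k}_H$, a Kolmogorov series theorem yields $\sum_k |\gamma_k|\,\norm{Q_k}_H < \infty$ almost surely, and measurability is verified through the pairings $u \mapsto \pair{u,f}$. Two remarks on where you deviate, both to your credit. First, your closing step --- the tails have vanishing $\M(\Omega,H)$-norm, so the partial sums are a.s.\ Cauchy in the Banach space and converge in norm --- is cleaner than the paper's, which asserts $u_n \rightharpoonup^* u$ a.s.\ and invokes weak* lower semicontinuity of the norm without first exhibiting the limit $u$ whose existence is the point at issue; your argument produces the limit directly and also supplies the measurability of $U$ via a.s.\ convergence of $\pair{U_n,f}$, which the paper omits in case (2). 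Second, the bookkeeping worry you flag is genuine: Kolmogorov's two- (or three-) series theorem requires summability of the means as well as of the variances, and since the $Q_k$ are i.i.d.\ with $\E\norm{Q_1}_H = m$ (generically $m>0$), this forces $\sum_k |\gamma_k| < \infty$. This does \emph{not} follow from $\{|\gamma_k|^2\} \in \ell^p$, which only gives $\{\gamma_k\} \in \ell^{2p}$, and $\ell^{2p} \not\subset \ell^1$ for $p \ge 1$. The paper's own proof silently relies on the standing clause that the $\gamma_k$ ``decay sufficiently fast''; making the hypothesis $\{\gamma_k\} \in \ell^1$ explicit, as you propose, is the correct fix and should be stated in the final write-up.
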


\begin{proof}We adapt the proof in \cite{hosseini_2017}. First, assume that $K < \infty $ almost surely. The map 
\[ \omega \mapsto u (\omega) = \sum_{k=1}^{K(\omega)} \gamma_k Q_k(\omega) \delta_{Y_k(\omega)},\]
is measurable, since for every $n \in \mathbb{N}$ and $E \in \B_{w^*}$, the set
\begin{align*}
	W_n := \left\{ \omega \in \Theta: K(\omega) = n \text{ and } u(\omega) = \sum_{k=1}^n \gamma_k Q_k(\omega)\delta_{Y_k(\omega)} \in E \right\}
\end{align*}
is measurable. Hence, the set $W := \cup_{n=1}^{\infty} W_n = u^{-1}(E)$ is measurable. In addition, there holds
\begin{align*}
	\norm{u}_{\M(\Omega, H)}  \le \sum_{k=1}^{K(\omega)} |\gamma_k| \norm{Q_k(\omega)}_{H}.
\end{align*}
Since $\P(\omega: K(\omega) < \infty) = 1$, we conclude that $\norm{u}_{\M(\Omega, H)} < \infty$ almost surely.
	
Next, we assume that $K = \infty$. In this case, denote 
\begin{align*}
	u_n := \sum_{k=1}^n \gamma_k Q_k \delta_{Y_k}.
\end{align*}
It can be seen that $u_n$ is $H$-valued random measure. In addition, one has
\begin{align*}
	\norm{u_n}_{\M(\Omega, H)} \le \sum_{k=1}^n |\gamma_k| \norm{Q_k}_{H} :=  v_n.
\end{align*}
We prove that the sequence $\{v_n\}_{n \in \mathbb{N}}$ is bounded almost surely. Indeed, by H\"{o}lder's inequality, we have 
\begin{align*}
	\sum_{k=1}^{\infty} \var \norm{\gamma_k Q_k}_{H} 
	&= \sum_{k=1}^{\infty}  |\gamma_k|^2 \var \norm{Q_k}_H  \\
	& \le \norm{ \{ |\gamma_k|^2\}_{k \in \mathbb{N}}}_{\ell^p} \norm{ \{\var \norm{Q_k}_H \}}_{\ell^q} < \infty.	
\end{align*}
Hence, by Kolmogorov's Theorem, we have $\sum_{k=1}^{\infty}  |\gamma_k| \norm{Q_k}_{H} < \infty$ almost surely. Hence, $\{v_n\}_{n \in \mathbb{N}}$ is bounded almost surely. Finally, since $u_n \rightharpoonup^* u$ almost surely, we have
\[ \norm{u}_{\M(\Omega, H)} \le \liminf_{n \to \infty} \norm{u_n}_{\M(\Omega, H)} \le \liminf_{n \to \infty} v_n < \infty.\]
The proof is complete.
\end{proof}

We provide some examples of random measures satisfying the assumptions in Proposition \ref{prop:randommeasure}. 
\begin{example}[(Poisson point process)]
	\label{ex:poisson}	
Let $\mathcal{Q}$ be a probability measure on $H$, and $G$ be a density function on $\Omega$. We assume that $K$ follows the Poisson distribution $\text{Pois}(\gamma)$, i.e.,
\begin{align*}
		P(K(\omega) = n) = \frac{\gamma^n \exp(-\gamma)}{n!}, \quad n = 0,1,\ldots
\end{align*}
Hence $u = \sum_{k=1}^{K} Q_k\delta_{Y_k}$ defines a random variable taking values in $\Mm(\Omega, H)$. This random measure is closely related to Poisson point processes on $\Omega$, see for instance, \cite{daley_vere_2008}.
Typically, the intensity $\gamma$ is specified in terms of a measure $\lambda$ defined on $\Omega$ or on $\Omega \times H$, known as the rate measure. If $\lambda$ is given in terms of $Q$ and $Y$, that is,
\begin{align*}
	\d \lambda = \nu (\d \mathcal{Q}) \cdot \varphi(\d G),
\end{align*}
then the random variable $K$ is not indepedent of $Y_k$ or $Q_k$.
\end{example}

Next, we show that $u$ has a finite first moment under certain conditions.

\begin{proposition}\label{prop:randommeasure} Let $u$ be defined in \eqref{eq:radon_random_measure} with \Truong{$\gamma_k = 1$ for all $k \in \mathbb{N}$}.  Assume that $\mathbb{E}[K] < \infty$ and $\sup_{k \in \mathbb{N}} \mathbb{E}[\norm{Q_k}_H] < \infty$. If $K$ and each $Q_k$ are independent, for every $k \in \mathbb{N}$, then $\mathbb{E}_{\mu_{\pr}}[\norm{u}_{\Mm}] < \infty$. In addition, if $\{Q_k\}_{k \in \N} \sim \mathcal{Q}$ is a sequence of independent and identically distributed (i.i.d.) $H$-valued random variables, then 
\begin{align}\label{eq:finite_moment}
	\mathbb{E}_{\mu_{\pr}}[\norm{u}_{\Mm}] = \mathbb{E}[K]\cdot \mathbb{E}[\norm{Q_1}_H].
\end{align}
\end{proposition}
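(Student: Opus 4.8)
The plan is to reduce the whole statement to a scalar Wald-type identity for the random series $\sum_{k=1}^{K}\norm{Q_k}_H$. For a realization $u=\sum_{k=1}^{K}Q_k\delta_{Y_k}$ (here $\gamma_k\equiv 1$), grouping the atoms that share a common location and applying the triangle inequality in $H$ gives the pointwise bound
\begin{equation*}
	\norm{u}_{\Mm}\;\le\;\sum_{k=1}^{K}\norm{Q_k}_H\;=\;\sum_{k=1}^{\infty}\mathbbm{1}_{\{K\ge k\}}\,\norm{Q_k}_H ,
\end{equation*}
which holds with equality whenever $Y_1,\dots,Y_K$ are pairwise distinct; in particular, if the locations are almost surely distinct (for instance drawn from a density on $\Omega$ as in Example~\ref{ex:poisson}), then $\norm{u}_{\Mm}=\sum_{k=1}^{K}\norm{Q_k}_H$ almost surely. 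Since every summand is nonnegative, no integrability hypothesis is needed to rearrange the series or to pass expectations through it.

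For the first claim I would take expectations and exchange sum and integral by Tonelli's theorem. The event $\{K\ge k\}$ is $\sigma(K)$-measurable and $K$ is independent of $Q_k$, so the two factors in the $k$-th term are independent, and
\begin{equation*}
	\E_{\mu_{\pr}}\!\left[\norm{u}_{\Mm}\right]\;\le\;\sum_{k=1}^{\infty}\P(K\ge k)\,\E\!\left[\norm{Q_k}_H\right]\;\le\;\Big(\sup_{j\in\N}\E\!\left[\norm{Q_j}_H\right]\Big)\sum_{k=1}^{\infty}\P(K\ge k).
\end{equation*}
Using the elementary identity $\sum_{k\ge1}\P(K\ge k)=\E[K]$ for the nonnegative integer-valued variable $K$, the right-hand side equals $\E[K]\cdot\sup_{j}\E[\norm{Q_j}_H]<\infty$, which gives $\E_{\mu_{\pr}}[\norm{u}_{\Mm}]<\infty$.

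For the identity \eqref{eq:finite_moment} one assumes additionally that the $Q_k$ are i.i.d.\ with law $\mathcal Q$ and that the atoms sit at a.s.\ distinct locations, so that the first display holds with equality a.s. Rerunning the computation above with equalities and $\E[\norm{Q_k}_H]=\E[\norm{Q_1}_H]$ for every $k$ yields $\E_{\mu_{\pr}}[\norm{u}_{\Mm}]=\E[\norm{Q_1}_H]\sum_{k\ge1}\P(K\ge k)=\E[K]\cdot\E[\norm{Q_1}_H]$; alternatively, condition on $\{K=n\}$, use independence of $K$ from the sequence $\{Q_k\}$ together with the i.i.d.\ property to get $\E[\sum_{k=1}^{K}\norm{Q_k}_H\mid K=n]=n\,\E[\norm{Q_1}_H]$, and average over $n$. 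I do not expect genuine difficulty here: the two points to be careful about are the Tonelli interchange (automatic, by nonnegativity) and the precise dependence assumptions — only $K\perp Q_k$ for each single $k$ is used, which suffices because $\{K\ge k\}\in\sigma(K)$, and nothing about the $Y_k$ is needed except, for the exact identity, their a.s.\ distinctness (without which only ``$\le$'' survives, strictly so when two amplitudes are non-aligned). The substantive work is already contained in the structural results of Section~\ref{sec:5_vectormeasures}; this proposition is essentially bookkeeping on top of them.
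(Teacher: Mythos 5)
Your proof is correct and takes essentially the same route as the paper's: reduce everything to the scalar series $\sum_{k=1}^{K}\norm{Q_k}_H$ and evaluate its expectation using the independence of $K$ from the amplitudes (you do this via the indicator decomposition and Tonelli, the paper via conditioning on $K$; both are the standard Wald argument). Your remark that $\norm{u}_{\Mm}=\sum_{k=1}^{K}\norm{Q_k}_H$ holds only when the $Y_k$ are pairwise distinct — and is otherwise merely an inequality, so the exact identity \eqref{eq:finite_moment} implicitly requires a.s.\ distinct locations — is a genuine point of care that the paper's proof glosses over by asserting the equality outright.
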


\begin{proof}Denote $M:= \sup_{k \in \N} \E [\norm{Q_k}_H] < \infty$. Since
	\begin{align*}
		\norm{u(\omega)}_{\Mm} = \sum_{k=1}^{K(\omega)} \norm{Q_k(\omega)}_H,\quad \omega \in \Omega,
	\end{align*}
one has
	\begin{align} \label{eq:finite_moment_2}
		\mathbb{E}\left[\norm{u}_{\Mm}\right] = \mathbb{E}\left[ \mathbb{E}[\sum_{k=1}^K \norm{Q_k}_H | K ]\right] \le  \mathbb{E} \left[ K M \right] = \mathbb{E}[K] M < \infty.
	\end{align}
Finally, \eqref{eq:finite_moment} follows from \eqref{eq:finite_moment_2} by using the i.i.d. properties of the sequence $\Truong{\{Q_k\}_{k \in \mathbb{N}}}$. This completes the proof.
\end{proof}


\subsection{Characterization of random measures}

In order to characterize a (probability) measure $\mu$ on a topological space, one might make use of its \textit{characteristic functional} $\widehat{\mu}$, see \cite[Section 7.13]{Bogachev_2007}. 

\begin{definition}Let $\mu$ be a measure on $\left(\M,  \sigma^*\right)$.  The characteristic functional of $\mu$ is the functional $\widehat{\mu}: \C \to \mathbb{C}$ defined by
	\begin{equation}
		\widehat{\mu}(f):= \int_{\Mm} \exp \left[i \pair{u,  f}\right] \d \mu(u),\quad \text{ for all } f \in \C.
	\end{equation}
\end{definition}
Since $(\Mm, \sigma^*)$ is a locally convex Hausdorff space, the characteristic functional uniquely determines the measure:
\begin{proposition}[{\cite[Proposition 7.13.4]{Bogachev_2007}}] Assume that $\mu_1$ and $\mu_2$ are Radon measures on $\M$.  Then $\mu_1 = \mu_2$ if and only if $\widehat{\mu_1} = \widehat{\mu_2}$.
\end{proposition}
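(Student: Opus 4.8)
The plan is to establish the nontrivial implication, that $\widehat{\mu_1}=\widehat{\mu_2}$ forces $\mu_1=\mu_2$; the converse is immediate from the definition of the characteristic functional. The main tool is Proposition~\ref{prop:equivalence_sigma}, which identifies $\B_{w^*}$ with the $\sigma$-algebra generated by the linear functionals $u\mapsto\pair{u,f}$, $f\in\C$, so that a measure on $(\M,\B_{w^*})$ is, in principle, accessible through its finite-dimensional projections.

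Concretely, I would fix a finite family $f_1,\dots,f_k\in\C$ and consider $\pi\colon\M\to\R^k$, $\pi(u)=(\pair{u,f_1},\dots,\pair{u,f_k})$, which is weak*-continuous and hence $\B_{w^*}$-measurable. Let $\nu_j:=\pi_*\mu_j$ be the image measures on $\R^k$; these are finite Borel measures since $\widehat{\mu_j}(0)=\mu_j(\M)<\infty$. Using linearity of $f\mapsto\pair{u,f}$ one computes, for $t=(t_1,\dots,t_k)\in\R^k$,
\[
  \widehat{\nu_j}(t)\;=\;\int_{\R^k}e^{\,i\,t\cdot x}\,\d\nu_j(x)\;=\;\int_{\M}\exp\!\Big[\,i\sum_{l=1}^k t_l\,\pair{u,f_l}\Big]\,\d\mu_j(u)\;=\;\widehat{\mu_j}\!\Big(\sum_{l=1}^k t_l f_l\Big).
\]
Thus $\widehat{\mu_1}=\widehat{\mu_2}$ yields $\widehat{\nu_1}=\widehat{\nu_2}$ on $\R^k$, and the uniqueness theorem for Fourier transforms of finite Borel measures on $\R^k$ gives $\nu_1=\nu_2$. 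Hence $\mu_1$ and $\mu_2$ agree on every cylinder set $\{u\in\M:\pi(u)\in B\}$, $B\in\B(\R^k)$.

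The remaining, and most delicate, step is to upgrade agreement on cylinder sets to agreement on all of $\B_{w^*}$ — this is exactly where the Radon hypothesis enters in the general statement, since in an arbitrary locally convex space the Borel and cylindrical $\sigma$-algebras may differ and the characteristic functional only controls the latter. In our setting the family of cylinder sets (over all finite subfamilies of $\C$ and all Borel $B$) is stable under finite intersections, hence a $\pi$-system containing $\M$, and by Proposition~\ref{prop:equivalence_sigma} it generates $\B_{w^*}$; since $\mu_1,\mu_2$ are finite and agree on this $\pi$-system, Dynkin's $\pi$--$\lambda$ theorem yields $\mu_1=\mu_2$ on $\B_{w^*}$. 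The separability of $\C(\Omega,H)$ underlying Proposition~\ref{prop:equivalence_sigma}, together with the fact (noted earlier via the Souslin property of $\M$) that every finite Borel measure on $\M$ is automatically Radon, makes this argument complete for the probability measures of interest here; for genuinely infinite Radon measures one would first reduce to the finite case using inner regularity by metrizable compact sets, but this refinement is not needed.
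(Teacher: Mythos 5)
Your proof is correct. Note that the paper does not actually prove this proposition---it is quoted verbatim from Bogachev \cite[Proposition 7.13.4]{Bogachev_2007}---so what you have written is a self-contained reconstruction of the standard argument behind the cited result: reduce to finite-dimensional projections $\pi=(\pair{\cdot,f_1},\dots,\pair{\cdot,f_k})$, identify $\widehat{\pi_*\mu_j}(t)$ with $\widehat{\mu_j}(\sum_l t_l f_l)$, invoke Fourier uniqueness on $\R^k$, and pass from the resulting $\pi$-system of cylinder sets to the full $\sigma$-algebra via Dynkin. The one genuine difference from Bogachev's route is that you bypass the general ``Radon measures are determined by their cylindrical restrictions'' step (which is where inner regularity by compacta is really used in an arbitrary locally convex space) by appealing to Proposition~\ref{prop:equivalence_sigma}, which says that in this particular space $\B_{w^*}$ \emph{is} the cylindrical $\sigma$-algebra generated by $\C(\Omega,H)$; this is a legitimate and cleaner shortcut here, and you correctly flag that this is exactly the point where the Radon hypothesis would otherwise enter. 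Two small caveats: the Dynkin step as stated requires the measures to be finite and nonnegative (for signed Radon measures one would argue via a monotone class or the Jordan decomposition), which is harmless since the paper only ever applies this to probability measures; and the closing remark about ``genuinely infinite Radon measures'' is somewhat beside the point, since the characteristic functional is only defined for bounded measures in the first place.
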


As a typical example, the Poisson point process in Example~\ref{ex:poisson} is characterized by a measure of the form
\begin{align*}
	\widehat{\mu}(f) 
	&= \exp \left(\gamma \int_{\Omega}\int_{H}\exp(i \inner{q, f(y)}_H ) - 1) \de q \de y \right) \\
	&= \exp \left(\gamma \int_{\M} \exp(i \pair{f, u}) - 1 \right)\de \mu_{0}(u),
\end{align*}
where $\mu_0$ is the distribution measure of random variables of the form $U = Q \delta_{Y}$. The proof follows that of \cite[Proposition 5.3.1]{linde_1986} for the compound Poisson process. This, in particular, implies that the measure $\mu$ defining the Poisson point process is an \textit{infinitely divisible measure}; that is, for each $n \in \mathbb{N}$, there exists a Radon probability measure $\mu_{1/n}$ such that
\begin{align*}
\widehat{\mu}(f) = (\widehat{\mu_{1/n}}(f))^n,\quad \text{ for all }f \in \C.
\end{align*}

In fact, since $\M$ is a complete locally convex space, we have the following representation theorem from \cite[Satz 2.2]{dettweiler_1976}, which characterizes all infinitely divisible measures on $\M$. This makes use of the concept of a \Truong{L\'{e}vy} measure on a topological space; details are provided in \cite{dettweiler_1976}. The representation theorem indeed forms the foundation for constructing our class of random measures.

\begin{theorem}[(L\'{e}vy-Khintchine representation theorem)] A probability measure on $\Mm$ is infinitely divisible if and only if there exist a $u_0 \in \Mm$, a covariance operator $\mathcal{R}: \C \to \M$ and a L\'{e}vy measure $\nu$ such that
\begin{equation}
	\widehat{\mu}(f) = \exp\left[i \pair{u_0, f} - \frac{1}{2} \pair{\mathcal{R} f,f} + \int_{\Mm} (\exp(i \pair{u, f}) - 1 - i \pair{u, f}\bld{1}_F(u) )\d \nu(u)  \right],
\end{equation}
for all $f \in \C$. Here, $F$ is a convex, compact, and balanced neighborhood of $0$ (meaning that $\lambda F \subset F$ for all $\lambda$ with $|\lambda| < 1$) such that $\nu(F^{c}) < \infty.$
\end{theorem}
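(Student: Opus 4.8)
The plan is to deduce the statement as an instance of Dettweiler's general L\'{e}vy--Khintchine theorem for infinitely divisible Radon measures on complete locally convex Hausdorff spaces \cite{dettweiler_1976}; the only thing that genuinely requires checking is that $(\Mm, w^*)$ fits that framework, and this was essentially established in Section~\ref{sec:5_vectormeasures}. Indeed, $(\Mm, w^*)$ is a complete locally convex Hausdorff topological vector space whose topological dual is $\Cc$, it is a Souslin space, and hence every finite Borel measure on it is Radon. Moreover, since $\Omega$ is compact and $H$ is separable, the predual $\Cc$ is separable, so that every bounded subset of $\Mm$ -- equivalently, by Banach--Alaoglu, every weak* relatively compact subset -- is metrizable in $w^*$; this is the structural fact that makes the compactness and tightness arguments below go through in an otherwise non-metrizable ambient space.

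For the forward implication, suppose $\mu$ is infinitely divisible with $n$-th convolution roots $\mu_{1/n}$. The first step is to show that $\widehat{\mu}$ has no zeros on $\Cc$: since $|\widehat{\mu_{1/n}}|^{2}$ is again a characteristic functional and $|\widehat{\mu}|^{2/n} \to 1$ pointwise, a continuity argument on the finite-dimensional sections of $\Cc$ excludes vanishing, so a continuous logarithm $\psi := \log \widehat{\mu}$ exists and is a negative-definite functional. Next one isolates the jump part: fixing a convex, balanced, weak* compact set $F$, the finite measures $\nu_n$ given by $\nu_n(B) = n\,\mu_{1/n}(B \setminus F)$ form a tight family and converge, along the directed family of such $F$, to a measure $\nu$ with $\nu(F^{c}) < \infty$; this is the L\'{e}vy measure. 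Subtracting from $\psi$ the compound-Poisson contribution of $\nu$ leaves a quadratic-plus-linear functional, which identifies the shift $u_0 \in \Mm$ and the covariance operator $\mathcal{R}\colon \Cc \to \Mm$, and reassembling the three pieces yields the displayed formula.

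For the converse, given $u_0$, $\mathcal{R}$ and a L\'{e}vy measure $\nu$ as in the statement, one exhibits the corresponding $\mu$ directly as a convolution of a Gaussian measure with a limit of compound Poisson measures. The factor $\exp\big[i\pair{u_0,f} - \tfrac{1}{2}\pair{\mathcal{R}f,f}\big]$ is the characteristic functional of a Radon Gaussian measure on $\Mm$ centred at $u_0$ with covariance operator $\mathcal{R}$; the remaining factor is the pointwise limit of the characteristic functionals of the compound Poisson measures associated with $\nu$ truncated to $F_{k}^{c}$, for an increasing sequence $\{F_k\}_{k \in \N}$ of convex balanced weak* compact sets exhausting $\Mm$. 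The condition $\nu(F^{c}) < \infty$, together with the tightness afforded by weak* compactness of bounded sets, ensures that this limit is itself the characteristic functional of a Radon probability measure, and by uniqueness of the characteristic functional on $(\Mm, w^*)$ the convolution of the two factors is the desired infinitely divisible $\mu$.

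The main obstacle is the jump/Gaussian splitting in the non-metrizable setting: one must verify that the truncated measures $\nu_n$ are tight and that the admissible sets $F$ -- which in the weak* topology are precisely the weak* closed bounded absolutely convex sets -- can be chosen so that $\nu(F^{c}) < \infty$, and, in the converse direction, that the compound-Poisson approximants are uniformly tight so that their limit is Radon. These are exactly the points where Dettweiler's completeness hypothesis and the Souslin/Radon structure of $\Mm$ are needed; since those hypotheses have been verified for $(\Mm, w^*)$, the cited theorem applies, and what is genuinely specific to our setting is only the identification of $\Mm$ with the weak* topology as an admissible ambient space.
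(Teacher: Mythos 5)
Your proposal takes essentially the same route as the paper: the paper gives no proof of this theorem at all, but simply imports it as \cite[Satz 2.2]{dettweiler_1976} after observing that $(\Mm, w^*)$ is a complete locally convex Hausdorff (Souslin) space on which Borel measures are Radon, which is exactly the reduction you propose. Your additional sketch of the internal mechanics of Dettweiler's argument (non-vanishing of $\widehat{\mu}$, extraction of the L\'{e}vy measure from the convolution roots, and the compound-Poisson approximation in the converse) goes beyond what the paper records but is consistent with the cited source.
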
 

We remark that the infinitely divisible property for measures has also been studied in \cite{hosseini_2017}, for Radon measures on Banach spaces.

\section{Well-posedness of the Bayesian inverse problem}\label{sec:5_wellposedness} 
Using the prior measure introduced in Section~\ref{sec:5_priordistribution}, we are ready to prove the well-posedness of the Bayesian inverse problem \eqref{eq:5_BIP}. We recall that for a separable Banach space $Y$, our goal is to determine the posterior measure $\mu_{\post}^z$ given by
\begin{align} \label{eq:radon_nikodym}
	\dfrac{\d \mu^z_{\post}}{\d \mu_{\pr}}(u) = \frac{L(z|u)}{Z(z)}, \quad \text{where } Z(z) := \int_{\Mm} L(z|u) \, \d\mu_{\pr}(u), \
\end{align}
where we have $L(z|u) := \exp(\Psi(u;z))$. Under certain assumptions on the likelihood function $L$,  we establish the well-posedness of the Bayesian inverse problem \eqref{eq:5_BIP}. In order to prove well-posedness, we make use of the Hellinger distance between probability measures, which is commonly used in comparing probability distributions, especially in the context of Bayesian inverse problems; cf. \cite{stuart_2010}. Here, we recall its definition for the sake of convenience: The Hellinger distance between two probability measures $\mu$ and $\mu'$ on $\M$, denoted by $d_{\Hell}$, is given by
\begin{align*}
	d_{\Hell} (\mu, \mu')^2 = \frac{1}{2} \int_{\M} \left( \sqrt{\dfrac{\d \mu}{\d \nu}} - \sqrt{\dfrac{\d \mu'}{\d \nu}}\right)^2 \d \nu,
\end{align*}
where both $\mu$ and $\mu'$ are absolutely continuous with respect to $\nu$.

\subsection{Well-posedness of the Bayesian inverse problem}
In what follows, we adopt the approach of \cite{latz_2020} to present the assumptions that guarantee the well-posedness of the problem.
\begin{assumption}\label{ass:wellposedS}
	We assume that the likelihood function satisfies the following conditions:
	\begin{enumerate}[label=  \normalfont(A\arabic{enumi}),ref=A\arabic{enumi}, leftmargin=1.5cm]
		\item \label{eq:A1} For almost every $u \in \M$, the map $L(\cdot|u)$ is strictly positive.
		\item  \label{eq:A2} For every $z \in Y$, $L(z|\cdot)$ is measurable and $L(z|\cdot) \in L^1(\M, \d \mu_{\pr})$.
		\item  \label{eq:A3} There exists $g \in L^1(\M, \d \mu_{\pr})$ such that $L(z|\cdot) \le g$, for every $z \in Y$.
		\item \label{eq:A4} For every $u \in \M$, the function $L(u| \cdot) : Y \to \R$ is continuous.
	\end{enumerate}
\end{assumption}

Under the given assumptions, we provide a general well-posedness result:

\begin{theorem}\label{theo:5_wellposedness} Let the assumptions in Assumption \ref{ass:wellposedS} hold. Then the Bayesian inverse problem is well-posed, in the sense that:
	\begin{enumerate}
		\item[(1)] Existence and uniqueness: For every $z \in Y$, the posterior measure $\mu_{\post}^z$ exists uniquely.
		\item[(2)] Stability: For every $z \in Y$ and every sequence $\{z_n\}_{n \in \mathbb{N}} \subset Y$ such that $z_n \to z$ in $Y$, there holds $d_{\Hell} (\mu^{z_n}_{\post}, \mu^z_{\post}) \to 0$.
	\end{enumerate}
\end{theorem}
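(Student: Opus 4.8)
The plan is to follow the standard Stuart-type argument, adapted to the measurable-space setting of \cite{latz_2020}, exploiting the four conditions in Assumption~\ref{ass:wellposedS} in essentially the order they are stated. Throughout, the underlying measurable space is $(\M, \B_{w^*})$, which by the earlier discussion carries all the measurability we need.

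\textbf{Step 1: Existence and uniqueness.} Fix $z \in Y$. By \ref{eq:A2} the map $u \mapsto L(z|u)$ is $\B_{w^*}$-measurable and integrable against $\mu_{\pr}$, so the normalizing constant $Z(z) = \int_{\M} L(z|u)\,\d\mu_{\pr}(u)$ is well-defined and finite. The key point is that $Z(z) > 0$: by \ref{eq:A1}, $L(z|u) > 0$ for $\mu_{\pr}$-almost every $u$, hence the nonnegative integrand is strictly positive on a set of full prior measure, which forces $Z(z) > 0$. Therefore the formula \eqref{eq:radon_nikodym} defines a genuine probability measure $\mu_{\post}^z$ on $(\M, \B_{w^*})$, absolutely continuous with respect to $\mu_{\pr}$ with the stated density. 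Uniqueness is immediate: any probability measure satisfying \eqref{eq:radon_nikodym} has the same Radon--Nikodym derivative with respect to $\mu_{\pr}$ up to $\mu_{\pr}$-null sets, hence coincides with $\mu_{\post}^z$.

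\textbf{Step 2: Stability in the Hellinger distance.} Let $z_n \to z$ in $Y$. Both $\mu_{\post}^{z_n}$ and $\mu_{\post}^z$ are absolutely continuous with respect to $\mu_{\pr}$, so taking $\nu = \mu_{\pr}$ in the definition of $d_{\Hell}$ gives
\begin{align*}
2\, d_{\Hell}(\mu_{\post}^{z_n}, \mu_{\post}^z)^2 = \int_{\M} \left( \frac{\sqrt{L(z_n|u)}}{\sqrt{Z(z_n)}} - \frac{\sqrt{L(z|u)}}{\sqrt{Z(z)}} \right)^2 \d\mu_{\pr}(u).
\end{align*}
Adding and subtracting $\sqrt{L(z|u)}/\sqrt{Z(z_n)}$ and using $(a+b)^2 \le 2a^2 + 2b^2$ splits the right-hand side into a term controlled by $\tfrac{1}{Z(z_n)}\int (\sqrt{L(z_n|u)}-\sqrt{L(z|u)})^2\,\d\mu_{\pr}$ and a term controlled by $|Z(z_n)^{-1/2} - Z(z)^{-1/2}|^2 \int L(z|u)\,\d\mu_{\pr}$. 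For the first term: by \ref{eq:A4}, $L(z_n|u) \to L(z|u)$ pointwise for every $u$, hence so do the square roots; by \ref{eq:A3}, $(\sqrt{L(z_n|u)}-\sqrt{L(z|u)})^2 \le 2(L(z_n|u)+L(z|u)) \le 4g(u) \in L^1(\M,\d\mu_{\pr})$, so dominated convergence kills this integral. For the second term: the same dominated-convergence argument applied to $Z(z_n) = \int L(z_n|u)\,\d\mu_{\pr}$ gives $Z(z_n) \to Z(z) > 0$, and since $t \mapsto t^{-1/2}$ is continuous on $(0,\infty)$, the prefactor vanishes. One should also note $Z(z_n)$ stays bounded below (it converges to a positive limit), so the $\tfrac{1}{Z(z_n)}$ factor in the first term is harmless. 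Combining, $d_{\Hell}(\mu_{\post}^{z_n}, \mu_{\post}^z) \to 0$.

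\textbf{Main obstacle.} The analytic content here is routine once the measurability is in place, so the only genuine subtlety is bookkeeping around the normalizing constants --- specifically, ensuring $Z(z) > 0$ (which rests squarely on \ref{eq:A1}) and that $\liminf_n Z(z_n) > 0$ so that the $Z(z_n)^{-1}$ prefactors do not blow up. Everything else is two applications of the dominated convergence theorem with the common dominating function $g$ from \ref{eq:A3}. I would present the $d_{\Hell}$ estimate as a single clean inequality, state the two limits as lem-style observations, and conclude; no approximation or truncation argument is needed because, unlike in the construction of the prior, here we never leave the level of scalar integrals against the fixed measure $\mu_{\pr}$.
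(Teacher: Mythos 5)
Your proposal is correct and follows essentially the same route as the paper: positivity of $Z(z)$ from \ref{eq:A1}, continuity of $z\mapsto Z(z)$ via dominated convergence using \ref{eq:A3}--\ref{eq:A4}, and a final dominated-convergence argument for the Hellinger integral with dominating function proportional to $g$. The only cosmetic differences are that you split the Hellinger integrand into two terms Stuart-style where the paper applies dominated convergence to it directly with the bound $(\sqrt a-\sqrt b)^2\le a+b$, and that the paper additionally invokes the Bayes theorem for Radon spaces from \cite{latz_2020} to identify the normalized density as the posterior.
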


\begin{proof}Our proof adapts that of \cite[Theorem 2.5]{latz_2020}. Firstly, let $z \in Y$ be fixed. We prove that $Z(z) > 0$. Indeed, since $L(z|\cdot ) > 0$ on $\M$ by \eqref{eq:A1}, we have 
\[\M = \bigcup_{n=1}^{\infty} \M_n \text{ where } \M_n = \left\{ u \in \M: L(z|u) \ge \frac{1}{n} \right\}, n \in \mathbb{N}.\]
As $\Mm_{n} \subset \Mm_{n+1}$, for all $n \in \mathbb{N}$, we use the $\sigma$-continuity of $\mu_{pr}$ to have
\[ \lim_{n \to \infty} \mu_{\pr}(\Mm_n) = \mu_{\pr}\left( \cup_{n=1}^{\infty} \Mm_n \right) = \mu_{\pr}(\Mm) = 1.
\]
In particular, there exists $n_0 \in \mathbb{N}$ such that $\mu_{\pr}(\Mm_{n_0}) > 0$. Hence,
\begin{align*}
	Z(z) = \int_{\Mm} L(z|u) \d \mu_{\pr}(u) > \int_{\Mm_{n_0} } L(z|u) \d \mu_{\pr}(u) \ge \frac{\mu_{\pr}(\Mm_{n_0})}{n_0} > 0.
\end{align*}
Using the Bayes' Theorem for Radon spaces, see \cite[Lemma 2.4]{latz_2020}, we obtain the unique existence of $\mu_{\post}^z$ satisfying \eqref{eq:radon_nikodym}. 

To prove the stability property (2), we first prove the continuity of the function $z \mapsto Z(z)$. Indeed, for every sequence $\{z_k\}_{k \in \mathbb{N}}$, one has $L(u|z_k) \to L(u|z)$ for almost every $u \in \M$, by assumption \eqref{eq:A4}. Hence, by \eqref{eq:A3} and the dominated convergence theorem, we obtain
\begin{align*}
	Z(z_k) = \int_{\M} L(u|z_k) \d \mu_{\pr}(u) \to \int_{\M} L(u|z) \d \mu_{\pr}(u) = Z(z).
\end{align*}
Now, by the first part, the measure $\mu_{\post}^{z_k}$ is well-defined for every $k \in \mathbb{N}$. Hence we can write
\begin{align*}
	2d_{\Hell} (\mu^{z},\mu^{z_k})^2 = \int_{\Mm} \left| \sqrt{\dfrac{L(z|u)}{Z(z)}} - \sqrt{\dfrac{L(z_k|u)}{Z(z_k)}} \right|^2 \d\mu_{\pr}(u).
\end{align*}
By the continuity of $L(\cdot|u)$ and $Z$ for almost every $u \in \M$, we have
\begin{align*}
\sqrt{\dfrac{L(z|u)}{Z(z)}} - \sqrt{\dfrac{L(z_k|u)}{Z(z_k)}} \to 0 \text{ as }k \to \infty, \text{ for a.e. } u \in \M.
\end{align*}
On the other hand, we use the fact that $(\sqrt{a} - \sqrt{b})^2 \le a + b, \forall a, b \ge 0$ to obtain
\begin{align*}
	\left|\sqrt{\dfrac{L(z|u)}{Z(z)}} - \sqrt{\dfrac{L(z_k|u)}{Z(z_k)}} \right|^2 
	&\le \dfrac{L(z|u)}{Z(z)} +\dfrac{L(z_k|u)}{Z(z_k)} \\
	&\le \frac{2}{Z(z)} (L(z|u)+ L(z_k|u)) \le \frac{4}{Z(z)} g(u),
\end{align*}
where we have used \eqref{eq:A2} and the continuity of $Z$ in the last inequality. Since $4g(\cdot)/Z(z) \in L^1(\M, \d \mu_{\pr})$ for every $z \in Y$, we again use the dominated convergence theorem to conclude that $d_{\Hell} (\mu^z, \mu^{z_k}) \to 0$. The proof is complete.
\end{proof}

\subsection{Approximation of the posterior distribution} In the previous section, we have derived the well-posedness of the Bayesian inverse problem \eqref{eq:5_BIP}. Nevertheless, in practical applications, solving the inverse problem directly in an infinite-dimensional Banach space is not feasible. Hence, approximations are necessary, and it becomes important to study whether the perturbed posterior, arising from the approximation of the forward model, converges to the posterior associated with the exact model. To formalize this, let $L_N$ denote an approximation of the likelihood $L$, obtained, for instance, through discretization of the forward operator or the underlying space. For every fixed $z \in Y$, under certain assumptions on the the approximation $L_N$, the existence of the posterior measure corresponding to the likelihood $L_N$ is guaaranted, which is denoted by $\mu^z_{\post, N}$. Formally, it is given by
\begin{equation}\label{eq:approx_bayesian}
	\dfrac{\d \mu_{\post, N}^z}{\d \mu_{\pr}} = \dfrac{L_N(z|u)}{Z_N(z)} ,\quad \text{ where } Z_N(y) = \int_{\M}  L_N(z|u)d\mu_{\pr} (u).
\end{equation}
The question is whether the sequence of  measures $\{ \mu^z_{\post, N}\}_{N \in \mathbb{N}}$ converges to $\mu^z_{\post}$, under appropriate conditions, is addressed in the following.

\begin{theorem}\label{theo:5_consistency} Assume that $L_N, L$ satisfy Assumption~\ref{ass:wellposedS} for every $N \in \mathbb{N}$, where the same upper bound $g \in L^1(\Mm, \d \mu_{\pr})$ as in \eqref{eq:A3} is suposed to hold for all $N \in \mathbb{N}$. If
\begin{align*}
	\left| L_N(z|\cdot) - L(z|\cdot) \right| \to 0 \quad \text{ almost surely in }\M,\text{ for every }z \in Y,
\end{align*}
then $\quad d_{\Hell}(\mu^z_{\post}, \mu^z_{\post, N}) \to 0$.
\end{theorem}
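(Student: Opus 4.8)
The plan is to reproduce the stability argument from the proof of Theorem~\ref{theo:5_wellposedness}, now with the discretization index $N$ playing the role of the data perturbation. Fix $z \in Y$. Since both $L$ and every $L_N$ satisfy Assumption~\ref{ass:wellposedS}, the first part of Theorem~\ref{theo:5_wellposedness} applies unchanged: it gives that $Z(z) > 0$, that $Z_N(z) > 0$ for every $N$, and that the posterior measures $\mu^z_{\post}$ and $\mu^z_{\post,N}$ exist, are unique, and are absolutely continuous with respect to the common reference measure $\mu_{\pr}$, with densities as in \eqref{eq:radon_nikodym} and \eqref{eq:approx_bayesian}. In particular the Hellinger distance between them can be computed against $\nu = \mu_{\pr}$.

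The first substantive step is to show $Z_N(z) \to Z(z)$. This follows from the hypothesis $\left| L_N(z|\cdot) - L(z|\cdot)\right| \to 0$ $\mu_{\pr}$-almost surely together with the uniform bound $L_N(z|\cdot) \le g \in L^1(\M, \d\mu_{\pr})$ from \eqref{eq:A3} and the dominated convergence theorem. Since $Z(z) > 0$ and each $Z_N(z) > 0$, the convergence $Z_N(z) \to Z(z)$ forces $c := \inf_{N} Z_N(z) > 0$. Next I would write
\begin{align*}
	2\, d_{\Hell}(\mu^z_{\post}, \mu^z_{\post,N})^2 = \int_{\M} \left| \sqrt{\dfrac{L(z|u)}{Z(z)}} - \sqrt{\dfrac{L_N(z|u)}{Z_N(z)}} \right|^2 \d\mu_{\pr}(u),
\end{align*}
and check that the integrand converges to $0$ for $\mu_{\pr}$-almost every $u$, using $L_N(z|u) \to L(z|u)$ almost surely, $Z_N(z) \to Z(z) > 0$, and continuity of the square root. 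To dominate the integrand uniformly in $N$ I would use $(\sqrt{a} - \sqrt{b})^2 \le a + b$ for $a,b \ge 0$, which gives
\begin{align*}
	\left| \sqrt{\dfrac{L(z|u)}{Z(z)}} - \sqrt{\dfrac{L_N(z|u)}{Z_N(z)}} \right|^2 \le \dfrac{L(z|u)}{Z(z)} + \dfrac{L_N(z|u)}{Z_N(z)} \le \left( \dfrac{1}{Z(z)} + \dfrac{1}{c}\right) g(u),
\end{align*}
which lies in $L^1(\M, \d\mu_{\pr})$ and is independent of $N$. A final application of the dominated convergence theorem then yields $d_{\Hell}(\mu^z_{\post}, \mu^z_{\post,N}) \to 0$, as claimed.

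I do not expect a serious obstacle here: the argument is a routine adaptation of the stability part of Theorem~\ref{theo:5_wellposedness}, with two applications of dominated convergence. The only place that needs a little care is the control of the normalizing constants $Z_N(z)$ — one must pass through $Z_N(z) \to Z(z) > 0$ first, both to bound $1/Z_N(z)$ uniformly in $N$ (for the domination step) and to obtain the pointwise convergence of the integrand; everything else is immediate from the hypotheses.
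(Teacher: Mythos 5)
Your proposal is correct and follows essentially the same route as the paper: dominated convergence gives $Z_N(z)\to Z(z)>0$, then the elementary inequality $(\sqrt{a}-\sqrt{b})^2\le a+b$ together with the uniform bound $g$ and a second application of dominated convergence yields the Hellinger convergence. Your handling of the normalizing constants via $\inf_N Z_N(z)>0$ is in fact slightly more careful than the paper's bound $1/Z_N(z)\le 2/Z(z)$, which strictly speaking only holds for $N$ large.
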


\begin{proof}The proof proceeds analogously to that of Theorem~\ref{theo:5_wellposedness}. Let $z \in Y$ be fixed. For every $N \in \N$, since $L_N(z|u) \to L(z|u)$ for a.e. $u \in \Mm$ and $L_N(z|u) \le g(u)$ for some $g \in L^1(\Mm, \d\mu_{\pr})$, we again use the dominated convergence theorem to have
\begin{align*}
	Z_N(z) = \int_{\Mm} L_N(z|u) \d \mu_{\pr}(u) \to \int_{\Mm} L(z|u) \d \mu_{\pr}(u) = Z(z),\quad \text{ for all }z \in Y.
\end{align*}
Hence, one has
\begin{align*}
	\sqrt{\dfrac{L_N(z|u)}{Z_N(z)}} - \sqrt{\dfrac{L(z|u)}{Z(z)}} \to 0,\quad \text{ for all } z \in Y, \text{ for a.e. } u \in \Mm.
\end{align*}
In addition, since
\begin{align*}
	\left|\sqrt{\dfrac{L_N(z|u)}{Z_N(z)}} - \sqrt{\dfrac{L(z|u)}{Z(z)}}\right|^2
	&\le \dfrac{L_N(z|u)}{Z_N(z)} + \dfrac{L(z|u)}{Z(z)}  \\
	&\le \frac{2}{Z(z)} \left( L_N(z|u) + L(z|u) \right) \le \frac{4 g(u)}{Z(z)}.
\end{align*}
Since $g \in L^1(X, \d \mu_{\pr})$, we again use the dominated convergence theorem to conclude that
\begin{align*}
	2d_{\Hell} (\mu^{z}_{\post},\mu^{z}_{\post, N})^2 = \int_{\M} \left| \sqrt{\dfrac{L(z|u)}{Z(z)}} - \sqrt{\dfrac{L_N(z|u)}{Z_N(z)}} \right|^2 \d\mu_{\pr}(u) \to 0,
\end{align*}
from which the proof is complete.
\end{proof}

\begin{remark}In Theorem~\ref{theo:5_wellposedness}, which concerns the well-posedness of the Bayesian inverse problem, and Theorem~\ref{theo:5_consistency}, which addresses the consistency of the approximations, our convergence results are provided without a convergence rate, which follows from the general setting for Bayesian inverse problems given in \cite{latz_2020}. We remark that further assumptions could be considered in order to obtain local Lipschitz continuity. For instance, the results in \cite{hosseini_2017} could be applied in our setting, by noting that the map $u \mapsto \norm{u}_{\M}$ is measurable. A detailed treatment is beyond the scope of this paper and will be the subject of future research.
\end{remark}

\section{Applications} 

\subsection{Inverse problems with Gaussian noise} Finally, to illustrate the theory, we consider some examples applicable within this framework. As is typical in parameter identification problems, we consider the problem \eqref{eq:5_BIP}, where the observation space $Y$ is finite-dimensional, i.e. $Y = \R^{N_o}$ and $\xi$ follows a Gaussian distribution, i.e. $\xi \sim \mathcal{N}(0, \Sigma)$ where $\Sigma$ is a positive-definite matrix.  In this setting, the likelihood potential function reads as
\[ \Psi(u;z) = \dfrac{1}{2}\norm{\mathcal{G}(u) - z}_{\Sigma}^2
				   = \dfrac{1}{2}\norm{\Sigma^{-1/2}(\mathcal{G}(u) - z)}^2_2.
				   \]
and $L(z|u) = \exp(-\Psi(u;z))$. As is typical, we assume that the forward operator $\G: \M(\Omega, H) \to \R^{N_o}$ is continuous in the weak* topology. Hence, one could verify that the assumptions on the likelihood function $L(z|u)$ are satisfied.

\begin{corollary}\label{cor:5_linear_gaussian}

Assume that the operator $\G: \M(\Omega, H) \to \R^{N_o}$ is weak* continuous. Then the assumptions in Assumption~\ref{ass:wellposedS} hold. 
\end{corollary}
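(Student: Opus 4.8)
The plan is to verify the four conditions \eqref{eq:A1}--\eqref{eq:A4} directly from the structure $L(z|u) = \exp(-\Psi(u;z))$ with $\Psi(u;z) = \tfrac12 \norm{\Sigma^{-1/2}(\G(u)-z)}_2^2$, using weak* continuity of $\G$ together with the measurability and integrability results established earlier in the paper. Since $Y = \R^{N_o}$ is finite-dimensional, the forward map takes values in a separable Hilbert space, so by the corollary following the proposition on maps from $\M(\Omega,H)$, weak*-to-weak (equivalently, here, weak*-to-norm) continuity of $\G$ makes $\G$ $\B_{w^*}$-measurable; composing with the continuous map $g \mapsto \tfrac12\norm{\Sigma^{-1/2}(g-z)}_2^2$ and then $\exp(-\,\cdot\,)$ shows $L(z|\cdot)$ is measurable for each fixed $z$, which is the measurability half of \eqref{eq:A2}.

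For \eqref{eq:A1}, the key observation is that $\Psi(u;z) < \infty$ for every $u \in \M$ (as $\G(u) \in \R^{N_o}$ and $\Sigma$ is positive-definite), hence $L(z|u) = \exp(-\Psi(u;z)) > 0$ for \emph{every} $u$ and every $z$; strict positivity is immediate. For the domination condition \eqref{eq:A3} --- and simultaneously the remaining part of \eqref{eq:A2} --- I would use that $\Psi(u;z) \ge 0$ for all $u,z$, so that $L(z|u) = \exp(-\Psi(u;z)) \le 1$ for all $z \in Y$ and all $u \in \M$. Thus one may simply take $g \equiv 1$, which lies in $L^1(\M, \d\mu_{\pr})$ since $\mu_{\pr}$ is a probability measure; this gives a uniform (in $z$) integrable upper bound and in particular $L(z|\cdot) \in L^1(\M,\d\mu_{\pr})$. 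Finally, for \eqref{eq:A4}, fix $u \in \M$: the map $z \mapsto \tfrac12\norm{\Sigma^{-1/2}(\G(u)-z)}_2^2$ is a continuous (indeed smooth) function of $z \in \R^{N_o}$ because $\G(u)$ is a fixed vector, and composing with the continuous map $t \mapsto e^{-t}$ yields continuity of $z \mapsto L(z|u)$.

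There is essentially no serious obstacle here; the one point requiring a touch of care is to note that the domination bound $g \equiv 1$ works precisely because the Gaussian likelihood is bounded by its value at the mean, i.e. $\Psi \ge 0$, so one does not need any growth or coercivity assumption on $\G$ beyond continuity — this is exactly why the finite-dimensional Gaussian noise model fits the abstract framework of Theorem~\ref{theo:5_wellposedness} so cleanly. I would also remark that the weak*-to-weak continuity hypothesis is what is actually used (to invoke the corollary on measurability of $\G$), and that on the finite-dimensional space $\R^{N_o}$ weak and norm topologies coincide, so the stated hypothesis ``$\G$ is weak* continuous'' is exactly what is needed. The proof then concludes by invoking Theorem~\ref{theo:5_wellposedness} to obtain existence, uniqueness, and Hellinger stability of $\mu_{\post}^z$.
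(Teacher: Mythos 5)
Your proposal is correct and follows essentially the same route as the paper's own (much terser) proof: measurability of $L(z|\cdot)$ from weak* continuity of $\G$, strict positivity and the uniform bound $L \le 1$ (so $g \equiv 1$ suffices for \eqref{eq:A2}--\eqref{eq:A3}), and continuity in $z$ for \eqref{eq:A4}. Your version simply spells out the details the paper leaves implicit.
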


\begin{proof}Continuity of the forward operator implies the measurability of the likelihood function. By the definition, the function $L$ is strictly positive for every $u \in \M$ and $z \in Y$. It is bounded by $1$ and the likelihood function is continuous in $z$ for any $u \in \M$. The proof is complete.
\end{proof}

\subsection{Examples} Finally, we introduce some concrete examples that can be studied in the Bayesian framework. \label{sec:example_Bayesian_point}

\begin{example}[Convolution problems with Gaussian kernels]
Let $\Omega \subset \R^d, d \ge 1$ be a compact domain with a non-empty interior. We consider the Gaussian kernel depending on $\sigma$ $k = k_\sigma: \Omega \times \Omega \to \R$ given by
\begin{align*}
    k(x,y) = k_{\sigma}(x,y) := \exp \left(-\dfrac{|x-y|^2}{2\sigma^2}\right), \quad x, y \in \Omega.
\end{align*}
Let the signal to identify be a real-valued discrete measure, represented as
\begin{align*} u = \sum_{k=1}^{N_s} q_k \delta_{y_k},\quad q_k \in \mathbb{R},\ y_k \in \Omega. 
\end{align*}

On $\Omega$, we fix a finite set of measurement locations $\bld{x} = (x_1,\ldots, x_{N_o})$ and consider the vector kernel
\begin{align*}
    k[\bld{x}, y] := (k(x_1, y); k(x_2, y);\ldots; k(x_{N_o}, y)), \quad y \in \Omega,
\end{align*}
as well as the (linear) forward operator $\mathcal{G}: \M(\Omega) \to \R^{N_o}$ defined by
\[ \mathcal{G}\miu = \int_{\Omega} k[\bld{x}, y] \d \miu(y),\quad \miu \in \M(\Omega). \]
Together with the additive noise $\xi$, we aim to determine $u$ from the measurement data $z$ through the model $z = \G u + \xi$. Some applications of this problem have been studied in, for instance, \cite{mccutchen_1967, Candes_Fernandez-Granda_2013}. In addition, the recent work \cite{huynh2023optimal} addresses the problem of selecting optimal sensor placements for this setting.
    
As the kernel is continuous, the map $\mathcal{G} : \M(\Omega) \to \R^{N_o}$ is weak* continuous. By Corollary~\ref{cor:5_linear_gaussian}, Theorem~\ref{theo:5_wellposedness} is applicable and the Bayesian inverse problem in this setting is well-posed. We define the prior distribution of $u$ through the random measure \begin{align*}
		u = \sum_{k=1}^K Q_k \delta_{Y_k},
	\end{align*}
	where we consider the distributions $K \sim \text{Poiss}(\gamma)$, $Q_k \sim \mathcal{N}(\mu, \sigma^2)$ and $Y_k \sim \text{Uniform}(\Omega)$. If we know that $q_k$ are positive coefficients, for instance in biological imaging \cite{schiebinger_robeva_recht_2018}, one could consider the Log-normal distribution, $Q_k \sim \text{LogNormal}(\mu,\sigma^2)$, which is known to be supported on $(0,\infty)$.
\end{example}

\begin{example}[(Sound source localization with Helmholtz equation)] The inverse sound source location problem seeks to recover an unknown acoustic source $u$, modeled as a superposition of time-harmonic monopoles, from noisy pointwise measurements of the acoustic pressure, that is $u$ has the form
	\begin{align*}
		u = \sum_{k=1}^{N_s} q_k \delta_{y_k}, \quad q_k \in \mathbb{C}, y_k \in \Omega_s,
	\end{align*}
	where $\Omega_s$ denotes the source domain. The problem is governed by the Helmholtz equation on a bounded domain; details can be found in \cite{pieper_tang_trautmann_walter_2020}. 
	
	In this setting, by \cite[Lemma 2.4]{pieper_tang_trautmann_walter_2020}, the solution operator  $S: \M(\Omega_S, \mathbb{C}) \to \C(\Omega_O, \mathbb{C})$ is linear and bounded, where $\Omega_O$ denotes the observation set with $\overline{\Omega_O} \cap \overline{\Omega_S} = \varnothing$, which implies that the (linear) observation operator $\G: \M(\Omega_S, \mathbb{C}) \to \mathbb{C}^{N_o}$ given by 
	\begin{align*}
		\G u = \left(S[u](x_1), \ldots, S[u](x_{N_o})\right), \quad x_1, \ldots, x_{N_o} \in \Omega_o
	\end{align*}
	is weak*-to-strong continuous. Hence, Theorem~\ref{theo:5_wellposedness} can be applied and the Bayesian inverse problem is well-posed. 
    
    We remark that the Bayesian inverse problem in this setting has also been studied in \cite{engel_hafemeyer_munch_schaden_2019}, where the prior is defined on $\ell_1$ via a sequence of measures $\{ \mu_{\pr, k}\}_{k \in \mathbb{N}}$. In our setting, the prior measure is naturally defined through \eqref{eq:radon_random_measure}. Here, one could consider again that $K \sim \text{Poiss}(\lambda)$, $Q_k \sim \text{Complex}\mathcal{N}(q, \sigma^2, c^2)$, where $\text{Complex}\mathcal{N}$ is the complex Gaussian distribution, and $Y_k \sim \text{Uniform}(\Omega_S)$, for $k \in \mathbb{N}$.
\end{example}

\section{Conclusion and remarks}

In this work, we study the Bayesian inverse problem \eqref{eq:5_BIP}, where the parameter to be identified belongs to a space of measures and, as such, inherits its sparse structure from the ambient space. To define a prior distribution in this space, we consider an appropriate topological structure--namely, the weak* topology--together with its corresponding Borel $\sigma$-algebra. The priors are characterized via point processes, which are measurable with respect to the underlying structure. With the given priors, we establish the well-posedness of the Bayesian inverse problem, as well as the consistency under the approximation of the likelihood function and the prior measure.

Nevertheless, a numerical study should be conducted to demonstrate the practical applicability of the approach. In addition, it is evident that appropriate choices of point processes are essential to ensure accurate reconstruction. These topics will be addressed in future work.

\section*{Acknowledgement}
This work was supported by the Austrian Science Fund (FWF) under the grant DOC78. The author owes a debt of gratitude to Prof. Barbara Kaltenbacher for her valuable discussions and comments, which significantly contributed to the improvement of this manuscript. He is also grateful to Prof. Daniel Walter for his comments and for his interest in a forthcoming collaboration that builds on this work.

\bibliography{OED}
\bibliographystyle{abbrv}
\end{document}